\DeclareMathAlphabet{\mathpzc}{OT1}{pzc}{m}{it} 
\newtheorem{Thm}{Theorem}[section]
\newtheorem{Cor}{Corollary}[section]
\newtheorem{Lem}{Lemma}[section]
\newtheorem{Prop}{Proposition}[section]
\newtheorem{Def}{Definition}[section]
\theoremstyle{definition}
\newtheorem{Rem}{Remark}[section]
\theoremstyle{definition}
\newtheorem*{Ack}{Acknowledgments}
\newcommand{\AC}{{\textsl{AC}\hspace{0.17ex}}} 
\newcommand\setmeno{\!\smallsetminus\!} 
\newcommand\function{\longrightarrow} 
\newcommand\indicator{\mathds{1}} 
 \newcommand\en{\mathbb{N}} 
\newcommand\ar{\mathbb{R}} 
\providecommand{\clint}[1]{\hspace{0.045ex}\left[#1\right]} 
\providecommand{\clsxint}[1]{\hspace{0.1ex}\left[#1\right[\hspace{0.15ex}} 
\providecommand{\opint}[1]{\hspace{0.15ex}\left]#1\right[\hspace{0.15ex}} 
\newcommand{\lebsets}{\mathscr{M}} 
\renewcommand{\L}{{\textsl{L}\hspace{0.17ex}}} 
\newcommand\leb{\mathpzc{L}} 
\DeclareMathOperator{\de}{d \! \hspace{0.2ex}} 
\newcommand{\Step}{{\textsl{St}\hspace{0.17ex}}} 
\newcommand\X{\textsl{X}\hspace{0.21ex}} 
\renewcommand\d{\textsl{d}} 
\newcommand\As{\textsl{A}\hspace{0.21ex}} 
\DeclareMathOperator{\Int}{int} 
\DeclareMathOperator{\Cl}{cl} 
\newcommand\Y{\textsl{Y}\hspace{0.21ex}} 
\renewcommand\S{\textsl{S}\hspace{0.21ex}} 
\DeclareMathOperator{\cont}{Cont}  
\DeclareMathOperator{\discont}{Discont}  
\newcommand{\Czero}{{\textsl{C}\hspace{0.18ex}}} 
\DeclareMathOperator{\Lipcost}{Lip} 
\newcommand{\Lip}{{\textsl{Lip}\hspace{0.15ex}}} 
\newcommand\E{\textsl{E}\hspace{0.21ex}} 
\newcommand\norm[2]{\Vert #1\Vert_{#2}} 
\renewcommand\H{\mathcal{H}} 
\newcommand\duality[2]{\langle #1,#2 \rangle} 
\newcommand{\Conv}{\mathscr{C}} 
\newcommand\K{\mathcal{K}} 
\DeclareMathOperator{\Proj}{Proj} 
\newcommand\hausd{\mathpzc{H}} 
\newcommand\A{\mathcal{A}} 
\newcommand\B{\mathcal{B}} 
\newcommand\C{\mathcal{C}} 
\newcommand\G{\mathcal{G}} 
\newcommand\Z{\mathcal{Z}} 
\newcommand{\BV}{{\textsl{BV}\hspace{0.17ex}}} 
\newcommand{\W}{{\textsl{W}\hspace{0.17ex}}} 
\DeclareMathOperator{\V}{V} 
\DeclareMathOperator{\pV}{V} 
\renewcommand\r{\textsl{r}} 
\newcommand\vartot[1]{\!\left\bracevert\! #1 \!\right\bracevert\!} 
\DeclareMathOperator{\D}{D\!} 
\newcommand{\ftilde}{\widetilde{f}} 
\newcommand{\Ctilde}{\widetilde{\mathcal{C}}} 
\newcommand{\utilde}{\widetilde{u}} 
\newcommand{\yhat}{\hat{y}} 
\renewcommand{\P}{{\mathsf{P}}} 
\newcommand{\M}{{\mathsf{M}}} 
\newcommand{\R}{{\mathsf{R}}} 
\providecommand{\cldxint}[1]{\hspace{0.15ex}\left]#1\right]} 
\renewcommand\l{\textsl{l}} 
\definecolor{blu}{rgb}{0.1,0.1,1}
\definecolor{green}{rgb}{0.0, 0.5, 0.0}
\definecolor{marr}{rgb}{0.63, 0.47, 0.35}
\begin{document}


\title[Multidimensional play]{Multidimensional play operators \\ with arbitrary BV inputs}

\author{Vincenzo Recupero}

\address{\textbf{Vincenzo Recupero} \\
        Dipartimento di Scienze Matematiche \\ 
        Politecnico di Torino \\
        Corso Duca degli Abruzzi 24 \\ 
        I-10129 Torino \\ 
        Italy. \newline
        {\rm E-mail address:}
        {\tt vincenzo.recupero@polito.it}}

\subjclass[2010]{47J20, 34C55, 26A45}
\keywords{Play operator,  Evolution variational inequalities, Functions of bounded variation, Sweeping processes, Convex sets.}



\begin{abstract}
In this paper we provide an integral variational formulation for a vector play operator where the inputs are allowed to be 
arbitrary functions with (pointwise) bounded variation, not necessarily left or right continuous. We prove that this problem admits a unique solution, and we show that in the left continuous and right continuous cases it reduces to the well known existing formulations.

\end{abstract}


\maketitle


\thispagestyle{empty}


\section{Introduction}

A rigorous explicit mathematical treatment of hysteresis probably started in the pioneering paper \cite{Bou71}, and in the monograph \cite{KraPok89}. In particular the authors of \cite{KraPok89} presented many original theorems together with previous results published in papers dated back to 1970. This monograph stimulated an extensive research which is testified for instance in the books \cite{Vis94, BroSpr96, Kre97, MieRou15} and in the references therein. 

One of the simplest models of hysteresis is the so-called \emph{play} operator. In a fair general framework it can be described in the following way. Let $\H$ be a real Hilbert space with inner product
$\duality{\cdot}{\cdot}$, let $\Z \subseteq \H$ be a closed convex subset containing the zero vector, and let $T > 0$ be a final time of the evolution considered. If $\W^{1,1}(\clint{0,T};\H)$ denotes the space of $\H$-valued absolutely continuous maps, then the \emph{(vector) play operator} is the operator $\P : \W^{1,1}(0,T;\H) \times \Z \function \W^{1,1}(0,T;\H)$ assigning to every 
couple $(u, z_0) \in \W^{1,1}(0,T;\H) \times \Z$ the only function $\P(u,z_0) := y \in \W^{1,1}(0,T;\H)$ satisfying the following three conditions
\begin{alignat}{3}
  & u(t) - y(t) \in \Z & \qquad & \forall t \in \clint{0,T}, \label{intro constr for P} \\
  & \duality{z - u(t) + y(t)}{y'(t)} \le 0 & \qquad & \forall z \in \Z, \quad \text{for $\leb^1$-a.e. $t \in \clint{0,T}$}, 
     \label{intro ineq for P} \\
  & u(0) - y(0) = z_{0} \in \Z,   \label{intro i.c. for P}
\end{alignat}
where $\leb^1$ denotes the one-dimensional Lebesgue measure and $y'$ is the time derivative of $y$ (in the following section we will recall all the precise definitions and theorems needed in the paper).
The play operator can also be considered as a model for a ``strain $\mapsto$ stress'' relation $u \mapsto x := u - y$ where the boundary of $\Z$ represents the so called yield surface: as long as $x$ remains in the interior $\Z$ the system has an elastic behavior; when $x$ touches the boundary of $\Z$ the system becomes plastic and a deformation occurs.

It is well known that $\P$ is particular cases of a hysteresis operator, i.e. an operator 
$\R : \W^{1,1}(0,T;\H) \times \Z \function \W^{1,1}(0,T;\H)$ which is  
\emph{causal}: 
\begin{equation}
  u = v \ \text{on $\clint{0,t}$} \quad \Longrightarrow \quad \R(u, z_0)(t) = \R(v, z_0)(t),
\end{equation}
and \emph{rate independent}:
\begin{equation}
  \R(u \circ \phi, z_0) = \R(u, z_0) \circ \phi 
\end{equation}
whenever $t \in \clint{0,T}$, $u, v \in \W^{1,1}(\clint{0,T};\H)$, and $\phi : \clint{0,T} \function \clint{0,T}$ is a surjective
absolutely continuous functions. Note that when $\H = \ar$, rate independence is the property that allows to represent the 
hysteresis loops in a plane $(u,y) = (u,\R(u,z_0))$ with no reference to time. The suggestive terms \emph{input} and \emph{output} are often used to refer to $u$ and $\R(u,z_0)$ respectively.
 
The play operator can be considered as a first order evolution problem in the unknown $y$, indeed we can rewrite \eqref{intro ineq for P} in the following way:
\begin{equation}
  y'(t) \in -\partial I_{u(t) - \Z}(y(t)) \qquad \text{for $\leb^1$-a.e. $t \in \clint{0,T}$}, \label{gradient flow for y - intro} 
\end{equation}
$\partial I_\Z$ being the subdifferential of the indicator function $I_\Z$:  $I_{\Z}(x) := 0$ if $x \in \Z$, $I_{\Z}(x) := \infty$ otherwise:
\begin{equation}
  \partial I_\Z(x) := \{v \in \H\ : \ \duality{v}{z-x} \le 0\ \forall z \in \Z\},
\end{equation}
essentially the set of all subtangents of $I_\Z$ at $x$.
thus the solution operator $\P$ can be considered as a particular case of \emph{sweeping process}, that is a generalized time dependent gradient flow of the type
\begin{equation}
  y'(t) \in - \partial I_{\C(t)}(y(t)) \qquad \text{for $\leb^1$-a.e. $t \in \clint{0,T}$}, \label{intro sweeping process}
\end{equation}
which includes the play operator by considering 
\begin{equation}
  \C(t) = \C_u(t) := u(t) - \Z, \qquad \in \clint{0,T}
\end{equation}
(for the sweeping processes we refer the reader, e.g., to \cite{Mor71, Mor76, Mor77, Mon93, AdlHadThi14, Rec16a}).

When $\H = \ar$ in the monographs \cite[Section 6.7]{KraPok89} and in \cite[Definition 2.3.13]{BroSpr96}
the play operator is extended to the space of functions of bounded variation $\BV(\clint{0,T};\ar)$ by using 
the property of rate independence: essentially they `fill in' the jumps with segments traversed with an infinite speed so that 
they can apply the operators in the continuous case. When the dimension of $\H$ is greater that 1, as observed in \cite{KreLau02}, ``this procedure turns out to be trajectory-dependent which makes the analysis difficult even if we restrict to some canonical (the shortest, say) trajectory filling in the jumps''. Therefore they follow another method which works in the space $\BV^\l(\clint{0,T};\H)$ of left continuous functions of bounded variation: it consists in defining the play operator $\P : \BV^\l(\clint{0,T};\H) \function \BV^\l(\clint{0,T};\H)$ as the solution operator which assigns to 
$u \in \BV^\l(\clint{0,T};\H)$ and $z_0 \in \Z$ the unique function $\P(u,z_0) := y \in \BV^\l(\clint{0,T};\H)$ satisfying the conditions
\begin{alignat}{3}
  & u(t) - y(t) \in \Z & \qquad & \forall t \in \clint{0,T}, \label{intro constr for P-BV} \\
  & \int_0^T\duality{z(t) - u(t+) + y(t)}{\de y(t)} \le 0 & \qquad & \forall z \in \BV(\clint{0,T};\H),\ z(\clint{0,T}) \subseteq \Z, 
     \label{intro ineq for P-BV} \\
  & u(0) - y(0) = z_{0} \in \Z, \label{intro i.c. for P-BV}
\end{alignat}
where the integral in \eqref{intro ineq for P-BV} is the integral in the sense of Young (see \cite[Section 3]{KreLau02}). Notice that this integral can also be written as a Lebesgue integral with respect to the measure $\D y$, the distributional 
measure of $y$, in the following way:
\begin{equation}
  \int_0^T\duality{z(t) - u(t+) + y(t+)}{\de \D y(t)} \le 0  \qquad  \forall z \in \BV(\clint{0,T};\H),\ z(\clint{0,T}) \subseteq \Z,
\end{equation}
as shown in \cite[Section A.4]{Rec11a} and in \cite[Theorem 3.2]{KopRec16}, or in \cite[Lemma 4.1]{RecSan18} in the more general setting of sweeping processes. We also mention another approach to the vector play operator with discontinuous inputs based on the papers \cite{Kle12, Kle14, Kle15, Kle16}.

In the present paper we follow a different procedure and we provide a multidimensional generalization of the methods of Krasnosel'ski\v{i} and Pokrovski\v{i} (\cite[Section 6.7, p. 56]{KraPok89}) and of Brokate and Sprekels 
(\cite[p. 51]{BroSpr96}), where the scalar play operator is extended  to the whole space of functions of bounded variation (not necessarily left or right continuous), and we obtain, as a byproduct, an explicit formulation (see \eqref{BV constraint for P-intro}--\eqref{BV i.c. for P-intro} below) and an existence/uniqueness result for the vector play operator acting on the whole space $\BV(\clint{0,T};\H)$, not only on its proper subset $\BV^\l(\clint{0,T};\H)$. 
In this regard let us observe that in \cite{KraPok89, BroSpr96} the scalar play is extended to 
$\BV(\clint{0,T};\ar)$, but no explicit formulation for the resulting extended operator is provided. Let us also notice that in the paper \cite{Mor77} Moreau studied the existence and uniqueness of solutions of (vectorial) sweeping processes with $\BV$ inputs which are not necessarily left or right continuous: he defined and obtained these solutions as a uniform limit of a sequence of solutions of properly discretized problems, but he provided an explicit formulation of the related problem as a differential inclusion only in the case of right continuous inputs. 

The motivations to deal with arbitrary $\BV$ inputs rather than left or right continuous ones are various. First of all
let us recall that $u(t)$ can be considered as an external loading of the physical system modeled by 
\eqref{intro constr for P}--\eqref{intro i.c. for P}, therefore if we deal with 
a loading that at a certain time $t_0$ moves very quickly from a position $x_{0,1}$ to a position $x_{0,2}$ and then possibly to a third position $x_{0,3}$, it is very natural to model such loading by assuming that $u(t_0-) = x_{0,1}$, 
$u(t_0) = x_{0,2}$, and $u(t_0+) = x_{0,3}$, and this corresponds to the approach of \cite{KraPok89, BroSpr96} in the scalar case. Of course one can argue if the resulting formulation (or even \eqref{intro constr for P}--\eqref{intro i.c. for P} in the left continuous case) is a suitable modelization of these kind of phenomena also in the vectorial case (for instance by performing simulations and measurements of suitable experiments), anyhow we are able show that this natural generalization is uniquely solvable.
Another motivation can be found in the introduction of the Moreau's paper \cite{Mor77} where the following hydrodynamical system is considered: there $\H = \ar^2$ and $\C(t) \subseteq \H$ is given for any $t$ belonging to the interval $\clint{0,T}$ which is interpreted as a segment of the vertical axis of the three dimensional physical space oriented downward. In this way $G = \{(x,t) \in \H \times \clint{0,T}\ :\ x \in \C(t)\}$ represents a solid cavity which in the case of the vectorial play operator is a sort of tube whose horizontal sections are a translation of the set $\Z \subseteq \ar^2$, since $\C(t) = u(t) - \Z$ for every $t \in \clint{0,T}$. According to \eqref{intro sweeping process}, it follows that graph of the solution $y(t)$ is ``a tiny stationary waterstream falling down the cavity''.  Therefore ``any arc of this stream which happens to be loose from the cavity wall is rectilinear and vertical'', while ``when water is running over the wall, it describes a line orthogonal to the level curves of the wall surface, i.e., a line of steepest descent; this agrees with hydrodynamics under the simplifying assumption that inertia may be neglected comparatively to friction and gravity''. A discontinuity at $t = t_0$ represents the fact that at a certain level $t_0$ the cavity has been moved from a position 
$\C(t_0) = \C(t_0-) = u(t_0-) - \Z$ to a position $\C(t_0+) = u(t_0+) - \Z$, due to some intrinsic constraints of the environment where the cavity has been inserted. The resulting model \eqref{intro sweeping process} (or \eqref{intro ineq for P-BV}) takes into account of the fact that, for instance, some mechanism exists or have been built in such a way that at the level $t_0$ the water is forced to follow the least distance to reach $\C(t_0+) = u(t_0+) - \Z$. Consequently, the fact that $u(t)$ is not necessarily left or right continuous allows to model a situation where at the level $t_0$ it is needed to bring the water from $\C(t_0-)$ to $\C(t_0+)$ through an intermediate region $\C(t_0)$.

Following \cite{KraPok89, BroSpr96}, the idea of the proof of our present paper is to fill in the jumps with a couple of suitable trajectory joining first $u(t-)$ with
$u(t)$, and then $u(t)$ with $u(t+)$ for every jump point $t \in \clint{0,T}$. Consequently we traverse these trajectories
with an infinite speed in order to obtain the extension of $\P$ to $\BV(\clint{0,T};\H)$.
The main difficulty in this procedure is the choice of the trajectories: if we consider a left continuous input and we fill in the jumps with segments, which seems the natural way, then in \cite{Rec11a} it is proved that the resulting operator is different from the play operator defined by \eqref{intro constr for P-BV}--\eqref{intro i.c. for P-BV} (a complete comparison between the two operators is performed in \cite{KreRec14a, KreRec14b} in the finite dimensional case). Thus another choice is in order and it seems that there is no chance to join the jumps in a canonical way that is intrinsic in the nature of the play operator and independent of the particular input. The idea to overcome this problem is to change the framework and interpret the play operator as a sweeping process, so that the inputs are now functions $\C$ having values in the metric space $\Conv_\H$ of nonempty closed convex sets of the form $\C(t) = u(t) - \Z$. Thus we have to find some convex-valued trajectory joining the sets $\C(t-) = u(t-) - \Z$ with $\C(t) = u(t) - \Z$ and $\C(t) = u(t) - \Z$ with $\C(t+) = u(t+) - \Z$. The choice has to be found among a family of geodesics in the space 
$\Conv_\H$, more precisely the proper geodesics are provided by 
\begin{equation}\label{G-}
  \G_{t-}(\sigma) := ( u(t-) - \Z + D_{\sigma\norm{u(t-)-u(t)}{}} ) \cap ( u(t) - \Z + D_{(1-\sigma)\norm{u(t-)-u(t)}{}} )
\end{equation}
and
\begin{equation}\label{G+}
  \G_{t+}(\sigma) := ( u(t) - \Z + D_{\sigma\norm{u(t)-u(t+)}{}} ) \cap ( u(t+) - \Z + D_{(1-\sigma)\norm{u(t)-u(t+)}{}} )
\end{equation}
where $D_\rho := \{x \in \H\ :\ \norm{x}{} \le \rho\}$ for $\rho > 0$.
Therefore our procedure essentially consists in reparametrizing by the arc length the function $\C_u(t) = u(t) - \Z$ and by filling in the jumps between $\C(t-) = u(t-) - \Z$ with $\C(t) = u(t) - \Z$ and $\C(t) = u(t) - \Z$ with $\C(t+) = u(t+) - \Z$
with the curves \eqref{G-} and \eqref{G+}. Then we take the solution of the corresponding sweeping process where
we essentially traverse these jumps with infinite velocity: this solution is the desired solution of the play operator
with the $\BV$ input and we prove that it is the the only function satisfying the following integral variational problem: 
given $u \in \BV(\clint{0,T};\H)$ there exists a unique function $y \in \BV(\clint{0,T};\H)$ such that 
\begin{alignat}{3}
  & u(t) - y(t) \in \Z & \qquad & \forall t \in \clint{0,T}, \label{BV constraint for P-intro} \\
  & \int_{\cont(u)} \duality{z(t) - u(t) + y(t)}{\de\D y(t)} \le 0 & \quad & 
                       \text{$\forall z \in \L^\infty(\clint{0,T};\H)$, $z(\clint{0,T}) \subseteq \Z$}, 
      \label{BV int ineq for P-intro} \\
  & u(t) - y(t) = \Proj_{\Z}(u(t) - y(t-)) & \qquad &  \forall t \in \discont(u) \setmeno \{0\}, \label{BV jump cond for P--intro} \\
  &  u(t+) - y(t+) = \Proj_{\Z}(u(t+) - y(t)) & \qquad &  \forall t \in \discont(u), 
       \label{BV jump cond for P+-intro} \\
  & u(0) - y(0) = z_0, \label{BV i.c. for P-intro}
\end{alignat}
where $\cont(u)$ and $\discont(u)$ denote respectively the continuity set and the discontinuity set of $u$.
Moreover in the left continuous case we prove that our generalized formulation coincide with the formulation 
\eqref{intro constr for P-BV}--\eqref{intro i.c. for P-BV}. Finally, let us also observe that when the inputs are continuous and with bounded variation there is no necessity to invoke the theory of sweeping processes and it is very easy to reduce the $\BV$ continuous case to the Lipschitz continuous case as showed in \cite{Rec08}.
The geodesics \eqref{G-} and \eqref{G+} belong to the general class of geodesics used in our paper \cite{Rec16a} in order to reduce the right continuous $\BV$ sweeping processes to the Lipschitz continuous case, but in the present paper the situation is different since in the case of arbitrary inputs in 
$\BV(\clint{0,T};\H)$
the well know formulation \eqref{intro constr for P-BV}--\eqref{intro i.c. for P-BV} is incorrect and we need to find a new formulation (namely \eqref{BV constraint for P-intro}--\eqref{BV i.c. for P-intro}) which is naturally inferred by means of our procedure by filling in the ``double'' jumps with the two geodesics 
\eqref{G-} and \eqref{G+} and by reducing to the Lipschitz continuous case.

Let us observe that it could be possible to infer the statement of our main result from the result of \cite{RecSan18} where sweeping processes with a prescribed behavior on jumps are dealt with. Nevertheless in \cite{RecSan18} our formulation for the play operator with arbitrary $\BV$-inputs is neither deduced nor explicitly written, moreover the proof we perform in the present paper is easier and geometrically more transparent than the analytic techniques used in \cite{RecSan18}, since we show the explicit way to connect the double jumps of 
$u(t) - \Z$, thereby displaying clearly what kind of dynamics happens in the jump points by rescaling the time and reducing to a Lipschitz continuous evolution thanks to the rate independence property of the play operator. This kind of proof also shows why the broader framework of sweeping process is somehow natural and how the choice of the geodesics plays a crucial role, since if we would remain in the classical framework of the play and if we would choose to connect the jumps of $u(t)$ with straight segments we would obtain another ``notion of solution'' which would not be consistent with the solution \eqref{intro constr for P-BV}--\eqref{intro i.c. for P-BV} of in the left continuous case.


\section{Preliminaries}\label{S:Preliminaries}

In this section we recall the main definitions and tools needed in the paper. The set of integers greater than or equal to 
$1$ will be denoted by $\en$. Given an interval $I$ of the real line $\ar$, if $\lebsets(I)$ indicates the family of Lebesgue measurable subsets of $I$, $\mu : \lebsets(I) \function \clint{0,\infty}$ is a measure, $p \in \clint{1,\infty}$, and if 
$\E$ is a Banach space, then the space of $\E$-valued functions which are $p$-integrable with respect to $\mu$ will be denoted by $\L^p(I, \mu; \E)$ or simply by $\L^p(\mu; \E)$. We do not identify two functions which are equal $\mu$-almost everywhere ($\mu$-a.e.). The one dimensional Lebesgue measure is denoted by $\leb^1$ which we assume to be complete. For the theory of integration of vector valued functions we refer, e.g., to \cite[Chapter VI]{Lan93}.

\subsection{Functions of bounded variation}

In this subsection we assume that 
\begin{equation}\label{X complete metric space}
  \text{$(\X,\d)$ is an extended complete metric space},
\end{equation}
i.e. $\X$ is a set and $\d : \X \times \X \function \clint{0,\infty}$ satisfies 
the usual axioms of a distance, but may take on the value $\infty$. The notion of completeness remains unchanged. 
The general topological notions of interior, closure and boundary of a subset $\Y \subseteq \X$ will be respectively denoted by $\Int(\Y)$, $\Cl(\Y)$ and $\partial \Y$. We also set $\d(x,\As) := \inf_{a \in \As} \d(x,a)$. If $(\Y,\d_\Y)$ is a metric space then the continuity set of a function $f : \Y \function \X$ is denoted by $\cont(f)$, while 
$\discont(f) := \Y \setmeno \cont(f)$. For $\S \subseteq \Y$ we write 
$\Lipcost(f,\S) := \sup\{\d(f(s),f(t))/\d_\Y(t,s)\ :\ s, t \in \S,\ s \neq t\}$, $\Lipcost(f) := \Lipcost(f,\Y)$, the Lipschitz constant of $f$, and $\Lip(\Y;\X) := \{f : \Y \function \X\ :\ \Lipcost(f) < \infty\}$, the set of $\X$-valued Lipschitz continuous functions on 
$\Y$.

\begin{Def}
Given an interval $I \subseteq \ar$, a function $f : I \function \X$, and a subinterval $J \subseteq I$, the 
\emph{(pointwise) variation of $f$ on $J$} is defined by
\begin{equation}\notag
  \pV(f,J) := 
  \sup\left\{
           \sum_{j=1}^{m} \d(f(t_{j-1}),f(t_{j}))\ :\ m \in \en,\ t_{j} \in J\ \forall j,\ t_{0} < \cdots < t_{m} 
         \right\}.
\end{equation}
If $\pV(f,I) < \infty$ we say that \emph{$f$ is of bounded variation on $I$} and we set 
$\BV(I;\X) := \{f : I \function \X\ :\ \pV(f,I) < \infty\}$.
\end{Def}

It is well known that the completeness of $\X$ implies that every $f \in \BV(I;\X)$ admits one-sided limits $f(t-), f(t+)$ at every point $t \in I$, with the convention that $f(\inf I-) := f(\inf I)$ if $\inf I \in I$, and $f(\sup I+) := f(\sup I)$ if $\sup I \in I$. Moreover $\discont(f)$ is at most countable. We set $\BV^\l(I;\X) := \{f \in \BV(I;\X)\ :\ f(t-) = f(t) \quad \forall t \in I\}$, 
$\BV^\r(I;\X) := \{f \in \BV(I;\X)\ :\ f(t) = f(t+) \quad \forall t \in I\}$, and if $I$ is bounded we have 
$\Lip(I;\X) \subseteq \BV(I;\X)$.

\subsection{Convex sets in Hilbert spaces}

Throughout the remainder of the paper we assume that
\begin{equation}\label{H-prel}
\begin{cases}
  \text{$\H$ is a real Hilbert space with inner product $(x,y) \longmapsto \duality{x}{y}$}, \\
  \norm{x}{} := \duality{x}{x}^{1/2},
\end{cases}
\end{equation}
and we endow $\H$ with the natural metric defined by $\d(x,y) := \norm{x-y}{}$, $x, y \in \H$. We set
\begin{equation}\notag
  \Conv_\H := \{\K \subseteq \H\ :\ \K \ \text{nonempty, closed and convex} \}.
\end{equation}
If  $\K \in \Conv_\H$ and $x \in \H$, then $\Proj_{\K}(x)$ is the projection on $\K$, i.e. $y = \Proj_\K(x)$ is the unique 
point such that $d(x,\K) = \norm{x-y}{}$, and it is also characterized by the two conditions
\begin{equation}\notag
  y \in \K, \quad \duality{x - y}{v - y} \le 0 \qquad \forall v \in \K.
\end{equation}
If $\K \in \Conv_\H$ and $x \in \K$, then $N_\K(x)$ denotes the \emph{(exterior) normal cone of $\K$ at $x$}:
\begin{equation}\label{normal cone}
  N_\K(x) := \{u \in \H\ :\ \duality{v - x}{u} \le 0\ \forall v \in \K\} = \Proj_\K^{-1}(x) - x.
\end{equation}
It is well known that the multivalued mapping $x \longmapsto N_\K(x)$ is \emph{monotone}, i.e. 
$\duality{u_1 - u_2}{x_1 - x_2} \ge 0$ whenever $x_j \in \K$, $u_j \in N_\K(x_j)$, $j = 1, 2$ (see, e.g., 
\cite[Exemple 2.8.2, p.46]{Bre73}). We endow the set $\Conv_{\H}$ with the Hausdorff distance. Here we recall the definition.

\begin{Def}
The \emph{Hausdorff distance} $\d_\hausd : \Conv_{\H} \times \Conv_{\H} \function \clint{0,\infty}$ is defined by
\begin{equation}\notag
  \d_{\hausd}(\A,\B) := 
  \max\left\{\sup_{a \in \A} \d(a,\B)\,,\, \sup_{b \in \B} \d(b,\A)\right\}, \qquad
  \A, \B \in \Conv_{\H}.
\end{equation}
\end{Def}

The metric space $(\Conv_\H, \d_\hausd)$ is complete (cf. \cite[Theorem II-14, Section II.3.14, p. 47]{CasVal77}).


\subsection{Differential measures}

We recall that a \emph{$\H$-valued measure on $I$} is a map $\mu : \lebsets(I) \function \H$ such that 
$\mu(\bigcup_{n=1}^{\infty} B_{n})$ $=$ $\sum_{n = 1}^{\infty} \mu(B_{n})$ whenever $(B_{n})$ is a sequence of mutually disjoint sets in $\lebsets(I)$. The \emph{total variation of $\mu$} is the positive measure 
$\vartot{\mu} : \lebsets(I) \function \clint{0,\infty}$ defined by
\begin{align}\label{tot var measure}
  \vartot{\mu}(B)
  := \sup\left\{\sum_{n = 1}^{\infty} \norm{\mu(B_{n})}{}\ :\ 
                 B = \bigcup_{n=1}^{\infty} B_{n},\ B_{n} \in \lebsets(I),\ 
                 B_{h} \cap B_{k} = \varnothing \text{ if } h \neq k\right\}. \notag
\end{align}
The vector measure $\mu$ is said to be \emph{with bounded variation} if $\vartot{\mu}(I) < \infty$. In this case the equality $\norm{\mu}{} := \vartot{\mu}(I)$ defines a norm on the space of measures with bounded variation (see, e.g. 
\cite[Chapter I, Section  3]{Din67}). 

If $\nu : \lebsets(I) \function \clint{0,\infty}$ is a positive bounded Borel measure and if $g \in \L^1(I,\nu;\H)$, then $g\nu$ will denote the vector measure defined by $g\nu(B) := \int_B g\de \nu$ for every $B \in \lebsets(I)$. In this case 
$\vartot{g\nu}(B) = \int_B \norm{g(t)}{}\de \nu$ for every $B \in \lebsets(I)$ (see \cite[Proposition 10, p. 174]{Din67}). Moreover a vector measure $\mu$ is called \emph{$\nu$-absolutely continuous} if $\mu(B) = 0$ whenever 
$B \in \lebsets(I)$ and $\nu(B) = 0$. 

Assume that $\mu : \lebsets(I) \function \H$ is a vector measure with bounded variation and let $f : I \function \H$ and 
$\phi : I \function \ar$ be two \emph{step maps with respect to $\mu$}, i.e. there exist $f_{1}, \ldots, f_{m} \in \H$, 
$\phi_{1}, \ldots, \phi_{m} \in \ar$ and $A_{1}, \ldots, A_{m} \in \lebsets(I)$ mutually disjoint such that 
$\vartot{\mu}(A_{j}) < \infty$ for every $j$ and $f = \sum_{j=1}^{m} \indicator_{A_{j}} f_{j},$, 
$\phi = \sum_{j=1}^{m} \indicator_{A_{j}} \phi_{j},$ where $\indicator_{S} $ is the characteristic function of a set $S$, i.e. 
$\indicator_{S}(x) := 1$ if $x \in S$ and $\indicator_{S}(x) := 0$ if $x \not\in S$. For such step functions we define 
$\int_{I} \duality{f}{\mu} := \sum_{j=1}^{m} \duality{f_{j}}{\mu(A_{j})} \in \ar$ and
$\int_{I} \phi \de \mu := \sum_{j=1}^{m} \phi_{j} \mu(A_{j}) \in \H$. If $\Step(\vartot{\mu};\H)$ (resp. $\Step(\vartot{\mu})$) is the set of $\H$-valued (resp. real valued) step maps with respect to $\mu$, then the maps
$\Step(\vartot{\mu};\H)$ $\function$ $\ar : f \longmapsto \int_{I} \duality{f}{\mu}$ and
$\Step(\vartot{\mu})$ $\function$ $\H : \phi \longmapsto \int_{I} \phi \de \mu$ 
are linear and continuous when $\Step(\vartot{\mu};\H)$ and $\Step(\vartot{\mu})$ are endowed with the 
$\L^{1}$-seminorms $\norm{f}{\L^{1}(\vartot{\mu};\H)} := \int_I \norm{f}{} \de \vartot{\mu}$ and
$\norm{\phi}{\L^{1}(\vartot{\mu})} := \int_I |\phi| \de \vartot{\mu}$. Therefore they admit unique continuous extensions 
$\mathsf{I}_{\mu} : \L^{1}(\vartot{\mu};\H) \function \ar$ and 
$\mathsf{J}_{\mu} : \L^{1}(\vartot{\mu}) \function \H$,
and we set 
\[
  \int_{I} \duality{f}{\de \mu} := \mathsf{I}_{\mu}(f), \quad
  \int_{I} \phi \mu := \mathsf{J}_{\mu}(\phi),
  \qquad f \in \L^{1}(\vartot{\mu};\H),\quad \phi \in \L^{1}(\vartot{\mu}).
\]

If $\nu$ is bounded positive measure and $g \in \L^{1}(\nu;\H)$, arguing first on step functions, and then taking limits, it is easy to check that 
\begin{equation}\label{f de gnu =fg de nu}
  \int_I\duality{f}{\de(g\nu)} = \int_I \duality{f}{g}\de \nu \qquad \forall f \in \L^{\infty}(\mu;\H).
\end{equation} 
The following results (cf., e.g., \cite[Section III.17.2-3, pp. 358-362]{Din67}) provide a connection between functions with bounded variation and vector measures which will be implicitly used in the paper.

\begin{Thm}\label{existence of Stietjes measure}
For every $f \in \BV(I;\H)$ there exists a unique vector measure of bounded variation $\mu_{f} : \lebsets(I) \function \H$ such that 
\begin{align}
  \mu_{f}(\opint{c,d}) = f(d-) - f(c+), \qquad \mu_{f}(\clint{c,d}) = f(d+) - f(c-), \notag \\ 
  \mu_{f}(\clsxint{c,d}) = f(d-) - f(c-), \qquad \mu_{f}(\cldxint{c,d}) = f(d+) - f(c+). \notag 
\end{align}
whenever $c < d$ and the left hand side of each equality makes sense. Conversely, if $\mu : \lebsets(I) \function \H$ is a vector measure with bounded variation, and if $f_{\mu} : I \function \H$ is defined by 
$f_{\mu}(t) := \mu(\clsxint{\inf I,t} \cap I)$, then $f_{\mu} \in \BV(I;\H)$ and $\mu_{f_{\mu}} = \mu$.
\end{Thm}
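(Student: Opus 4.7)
The plan is to prove the two directions separately, using the real-valued variation function of $f$ as a bridge to the classical one-dimensional Stieltjes theory. For the forward direction, given $f \in \BV(I;\H)$, I would first introduce $V_f(t) := \pV(f, \clint{\inf I, t} \cap I)$, which is real-valued, nondecreasing, and bounded. The classical Lebesgue--Stieltjes construction then produces a unique positive bounded Borel measure $\nu_f$ on $I$ whose values on each of the four types of subintervals are expressed in terms of $V_f(\cdot \pm)$. The monotonicity estimate $\norm{f(b) - f(a)}{} \le V_f(b) - V_f(a)$ for $a \le b$ (which also secures the existence of the one-sided limits $f(t\pm)$ via Cauchy's criterion) shows that the four prescribed formulas defining $\mu_f$ on the semi-ring of subintervals are mutually consistent on overlaps and that the resulting set function satisfies $\norm{\mu_f(J)}{} \le \nu_f(J)$ for every such $J$. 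I would then extend $\mu_f$ additively to the algebra generated by these intervals, verify countable additivity via the scalar domination, and invoke a Hahn--Kolmogorov-type theorem for $\H$-valued premeasures to obtain a unique $\H$-valued measure of bounded variation on the Borel $\sigma$-algebra; passage to $\lebsets(I)$ is by $\nu_f$-completion. Uniqueness of $\mu_f$ is automatic because two $\H$-valued measures agreeing on the $\pi$-system of open subintervals must coincide on the $\sigma$-algebra they generate.

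For the converse direction, let $\mu : \lebsets(I) \function \H$ be a vector measure of bounded variation and define $f_\mu(t) := \mu(\clsxint{\inf I, t} \cap I)$. The estimate
\begin{equation*}
  \sum_{j=1}^{m} \norm{f_\mu(t_j) - f_\mu(t_{j-1})}{}
  = \sum_{j=1}^{m} \norm{\mu(\clsxint{t_{j-1}, t_j})}{}
  \le \vartot{\mu}(I)
\end{equation*}
yields $f_\mu \in \BV(I;\H)$ with $\pV(f_\mu, I) \le \vartot{\mu}(I)$. To verify $\mu_{f_\mu} = \mu$, I would exploit continuity of $\mu$ along the nested families $\clsxint{\inf I, s}$ as $s \uparrow t$ or $s \downarrow t$ in order to identify $f_\mu(t-) = \mu(\clsxint{\inf I, t})$ and $f_\mu(t+) - f_\mu(t-) = \mu(\{t\})$; the four interval formulas in the statement then drop out by elementary set decompositions such as $\clint{c,d} = \{c\} \cup \clsxint{c,d} \cup \{d\}$.

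The main obstacle will be the Hahn--Kolmogorov-type extension in the forward direction, since $\H$-valued premeasures do not admit a direct Carath\'eodory outer-measure construction. The natural workaround is to exploit the scalar control $\norm{\mu_f(J)}{} \le \nu_f(J)$: on the generating algebra this forces $\mu_f$ to be absolutely continuous with respect to $\nu_f$ in a quantitative sense, so one can realize $\mu_f$ as $g\,\nu_f$ for a strongly $\nu_f$-measurable $g : I \function \H$ with $\norm{g(t)}{} \le 1$, obtained either by a Radon--Nikodym argument in the Hilbert setting or as an $\L^{1}(\nu_f;\H)$-limit of step maps approximating $\mu_f$ on the algebra. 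Particular care is required at the atoms of $\nu_f$, which correspond precisely to the discontinuity points of $f$, in order to ensure $\mu_f(\{t\}) = f(t+) - f(t-)$ in agreement with the consistency of the closed-interval formula $\mu_f(\clint{t,t})$ and the other three assignments.
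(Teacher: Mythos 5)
The paper does not actually prove this theorem: it is quoted, with an explicit reference, from Dinculeanu's monograph (\cite[Section III.17.2-3, pp. 358-362]{Din67}), so there is no internal proof to compare yours with. Your outline follows the standard construction and is essentially viable: the four formulas are telescoping and hence consistent on the semi-ring of subintervals, the bound $\norm{f(b)-f(a)}{}\le V_f(b)-V_f(a)$ gives both the one-sided limits and the domination $\norm{\mu_f(J)}{}\le\nu_f(J)$ by the Lebesgue--Stieltjes measure of the scalar variation, and the converse direction (the estimate giving $f_\mu\in\BV(I;\H)$, then identifying $f_\mu(t-)=\mu(\clsxint{\inf I,t}\cap I)$ and $f_\mu(t+)=\mu(\clint{\inf I,t}\cap I)$ by continuity of $\mu$ along monotone families) is correct as sketched. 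Two remarks on the extension step, which you rightly single out as the crux. First, the Radon--Nikodym variant of your workaround is circular as phrased: Radon--Nikodym presupposes that $\mu_f$ is already a countably additive measure on the $\sigma$-algebra, which is precisely what is being constructed; your second option (limits of step maps) is the one that works, and it can be streamlined so that neither a vector-valued Carath\'eodory theorem nor the Radon--Nikodym property is needed at all: the inequality $\norm{\mu_f(A)-\mu_f(B)}{}\le\nu_f(A\,\triangle\,B)$ on the generating algebra means $A\mapsto\mu_f(A)$ is uniformly continuous for the Fr\'echet--Nikodym pseudometric, the algebra is dense in the measure algebra of $\nu_f$, and completeness of $\H$ then yields a unique extension whose countable additivity follows from that of $\nu_f$. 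Second, for uniqueness you should make the scalarization explicit: compose with $\duality{\cdot}{h}$, $h\in\H$, to reduce to finite signed measures and apply the $\pi$--$\lambda$ theorem; note also that agreement on intervals determines the measure on the Borel sets, and on all of $\lebsets(I)$ one tacitly takes the completed extension --- an imprecision already present in the statement as quoted, not a defect introduced by your argument.
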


\begin{Prop}
Let $f  \in \BV(I;\H)$, let $g : I \function \H$ be defined by $g(t) := f(t-)$, for $t \in \Int(I)$, and by $g(t) := f(t)$, if 
$t \in \partial I$, and let $V_{g} : I \function \ar$ be defined by $V_{g}(t) := \pV(g, \clint{\inf I,t} \cap I)$. Then  
$\mu_{g} = \mu_{f}$ and $\vartot{\mu_{f}} = \mu_{V_{g}} = \pV(g,I)$.
\end{Prop}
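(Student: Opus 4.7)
The plan is to establish the two assertions in order. For $\mu_g = \mu_f$, I would first check that $g \in \BV(I;\H)$ with $\pV(g,I) \le \pV(f,I)$: given a partition $t_0 < \cdots < t_m$ of $I$ and $\eps > 0$, each value $g(t_j) = f(t_j-)$ can be approximated by some $f(s_j)$ with $s_{j-1} < s_j < t_j$ to within $\eps/(2m)$, whence $\sum_j \norm{g(t_j) - g(t_{j-1})}{} \le \pV(f,I) + \eps$ by the triangle inequality. Next, a standard property of $\BV$ maps yields $\lim_{t \to s\pm} f(t-) = f(s\pm)$ for every $s \in \Int(I)$, so that $g$ has the same one-sided limits as $f$ at every interior point (and coincides with $f$ on $\partial I$). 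Applying Theorem \ref{existence of Stietjes measure} to both $f$ and $g$,
\[ \mu_g(\opint{c,d}) = g(d-) - g(c+) = f(d-) - f(c+) = \mu_f(\opint{c,d}) \qquad \forall c < d \text{ in } I, \]
and the uniqueness part of the same theorem yields $\mu_g = \mu_f$.

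For the identity $\vartot{\mu_f} = \mu_{V_g}$, both sides are finite positive Borel measures on $I$, so by a monotone-class argument it suffices to check the equality on the $\pi$-system of half-open intervals $\clsxint{c,d}$. Since $g$ is left-continuous on $\Int(I)$, so is $V_g$ (a classical property of the variation function of a left-continuous $\BV$ map), and thus by Theorem \ref{existence of Stietjes measure} together with the additivity of the pointwise variation,
\[ \mu_{V_g}(\clsxint{c,d}) = V_g(d-) - V_g(c-) = V_g(d) - V_g(c) = \pV(g, \clint{c,d}). \]
On the other hand, $\vartot{\mu_f}(\clsxint{c,d})$ equals the supremum of $\sum_j \norm{\mu_f(\clsxint{t_{j-1}, t_j})}{} = \sum_j \norm{g(t_j) - g(t_{j-1})}{}$ over partitions $c = t_0 < \cdots < t_m = d$: the $\ge$ direction is immediate by specializing the defining supremum, while the $\le$ direction rests on the fact that on a metric space every Borel set can be approximated in $\vartot{\mu_f}$-measure by finite unions of intervals, so general Borel partitions give no greater total than interval partitions. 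Hence $\vartot{\mu_f}(\clsxint{c,d}) = \pV(g, \clint{c,d}) = \mu_{V_g}(\clsxint{c,d})$, and the mass identity $\vartot{\mu_f}(I) = \pV(g,I) = \mu_{V_g}(I)$ follows by exhausting $I$ with such intervals.

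The step I expect to be the main obstacle is the $\le$ direction in the computation of $\vartot{\mu_f}(\clsxint{c,d})$, since it goes beyond the algebraic identities of Theorem \ref{existence of Stietjes measure} and requires a measure-theoretic approximation: one combines the subadditivity $\norm{\mu_f(B_1 \cup B_2)}{} \le \norm{\mu_f(B_1)}{} + \norm{\mu_f(B_2)}{}$ with inner regularity of $\vartot{\mu_f}$ in order to replace an arbitrary countable Borel partition of $\clsxint{c,d}$ by a finite partition into half-open subintervals with comparable sum of $\mu_f$-norms.
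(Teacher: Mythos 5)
The paper never proves this Proposition: it is stated as a classical fact with the citation \cite[Section III.17.2-3]{Din67}, so there is no in-paper argument to compare yours with; what you have written is a from-scratch proof of a quoted result, and it follows the standard measure-theoretic route. On its own terms the plan is essentially sound: $\mu_g=\mu_f$ does follow from the coincidence of the one-sided limits of $f$ and $g$ together with the uniqueness in Theorem \ref{existence of Stietjes measure}, provided you observe that this coincidence (including the endpoint conventions, where $g=f$) lets $\mu_g$ satisfy \emph{all four} defining identities for $f$ --- checking open intervals alone is not enough, since open subintervals never see an atom sitting at an endpoint of $I$; your own remarks supply exactly what is needed, so this is a matter of wording rather than a gap.

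Two points do need tightening. First, the display $\mu_{V_g}(\clsxint{c,d})=V_g(d-)-V_g(c-)=V_g(d)-V_g(c)=\pV(g,\clint{c,d})$ uses left continuity of $V_g$ at $d$, which holds only for $d\in\Int(I)$; if $\sup I\in I$ and $d=\sup I$, then $g(d)=f(\sup I)$ may differ from $g(d-)$ and the identity fails. This is harmless --- restrict the $\pi$-system to half-open intervals with right endpoint in $\Int(I)$, which together with the total-mass identity still determines both measures --- but as stated the formula is wrong at the right endpoint. Second, the step you single out, $\vartot{\mu_f}(\clsxint{c,d})\le\pV(g,\clint{c,d})$, remains a sketch: to complete it you should truncate the countable Borel partition to finitely many pieces with small $\vartot{\mu_f}$-tail, use regularity of the finite Borel measure $\vartot{\mu_f}$ on $I\subseteq\ar$ to replace each piece by an element of the algebra generated by half-open intervals, and then disjointify these approximants within that algebra, controlling the errors by $\norm{\mu_f(B)}{}\le\norm{\mu_f(B')}{}+\vartot{\mu_f}(B\bigtriangleup B')$; the subadditivity $\norm{\mu_f(B_1\cup B_2)}{}\le\norm{\mu_f(B_1)}{}+\norm{\mu_f(B_2)}{}$ you invoke is not by itself the operative estimate. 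A slightly quicker variant of the same idea is to prove $\norm{\mu_f(B)}{}\le\mu_{V_g}(B)$ first for open sets (countable disjoint unions of open intervals) and then for all Borel $B$ by outer regularity of both measures, which yields $\vartot{\mu_f}\le\mu_{V_g}$ directly; combined with your easy inequality on intervals this closes the argument.
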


The measure $\mu_{f}$ is called \emph{Lebesgue-Stieltjes measure} or \emph{differential measure} of $f$. Let us see the connection with the distributional derivative. If $f \in \BV(I;\H)$ and if $\overline{f}: \ar \function \H$ is defined by
\begin{equation}\label{extension to R}
  \overline{f}(t) :=
  \begin{cases}
    f(t) 	& \text{if $t \in I$} \\
    f(\inf I)	& \text{if $\inf I \in \ar$, $t \not\in I$, $t \le \inf I$} \\
    f(\sup I)	& \text{if $\sup I \in \ar$, $t \not\in I$, $t \ge \sup I$}
  \end{cases},
\end{equation}
then, as in the scalar case, it turns out (cf. \cite[Section 2]{Rec11a}) that $\mu_{f}(B) = \D \overline{f}(B)$ for every 
$B \in \lebsets(\ar)$, where $\D\overline{f}$ is the distributional derivative of $\overline{f}$, i.e.
\[
  - \int_\ar \varphi'(t) \overline{f}(t) \de t = \int_{\ar} \varphi \de \D \overline{f} 
  \qquad \forall \varphi \in \Czero_{c}^{1}(\ar;\ar),
\]
$\Czero_{c}^{1}(\ar;\ar)$ being the space of real continuously differentiable functions on $\ar$ with compact support.
Observe that $\D \overline{f}$ is concentrated on $I$: $\D \overline{f}(B) = \mu_f(B \cap I)$ for every $B \in \lebsets(I)$, hence in the remainder of the paper, if $f \in \BV(I,\H)$ then we will simply write
\begin{equation}
  \D f := \D\overline{f} = \mu_f, \qquad f \in \BV(I;\H),
\end{equation}
and from the previous discussion it follows that 
\begin{equation}
  \norm{\D f}{} = \vartot{\D f}(I) = \norm{\mu_f}{}  = \pV(f,I) \qquad \forall f \in \BV^\r(I;\H).
\end{equation}
If $I$ is bounded and $p \in \clint{1,\infty}$, then the classical Sobolev space $\W^{1,p}(I;\H)$ consists of those functions $f \in \Czero(I;\H)$ such that $\D f = g\leb^1$ for some $g \in \L^p(I;\H)$ and we have  $\W^{1,p}(I;\H) = \AC^p(I;\H)$. Let us also recall that if $f \in \W^{1,1}(I;\H)$ then {}{ the derivative $f'(t)$  exists} for $\leb^1$-a.e. in $t \in I$, 
$\D f = f' \leb^1$, and $\V(f,I) = \int_I\norm{f'(t)}{}\de t$ (cf., e.g. \cite[Appendix]{Bre73}).

In \cite[Lemma 6.4 and Theorem 6.1]{Rec11a} it is proved that

\begin{Prop}\label{P:BV chain rule}
Assume that $J \subseteq \ar$ is a bounded interval and $h : I \function J$ is nondecreasing.
\begin{itemize}
\item[(i)]
  $\D h(h^{-1}(B)) = \leb^{1}(B)$ for every $B \in \lebsets(h(\cont(h)))$.
\item[(ii)]
 If $f \in \Lip(J;\H)$ and $g : I \function \H$ is defined by
\begin{equation}\notag
  g(t) := 
  \begin{cases}
    f'(h(t)) & \text{if $t \in \cont(h)$} \\
    \ \\
    \dfrac{f(h(t+)) - f(h(t-))}{h(t+) - h(t-)} & \text{if $t \in \discont(h)$}
  \end{cases},
\end{equation}
then $f \circ h \in \BV(I;\H)$ and $\D\ \!(f \circ h) = g \D h$. This result holds even if $f'$ is replaced by any of its 
$\leb^{1}$-representatives.
\end{itemize}
\end{Prop}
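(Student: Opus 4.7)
The strategy is to prove (i) directly from the Stieltjes-measure definition (Theorem~\ref{existence of Stietjes measure}) and then use (i) as a change-of-variables tool for (ii), where the identity $\D(f \circ h) = g\,\D h$ is checked separately on the continuity and discontinuity sets of $h$. For (i), the content is that the pushforward of the restriction of $\D h$ to $\cont(h)$ under the nondecreasing map $h$ agrees with $\leb^1$ on $h(\cont(h))$. I would verify this on open intervals with endpoints in $h(\cont(h))$: given $a = h(s) < b = h(t)$ with $s, t \in \cont(h)$, set $\alpha := \sup\{u \in I : h(u) \le a\}$ and $\beta := \inf\{u \in I : h(u) \ge b\}$, so that $h^{-1}(\opint{a,b}) = \opint{\alpha, \beta}$ by monotonicity. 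Continuity of $h$ at $s$ and $t$, combined with the fact that $h$ is constant equal to $a$ on a right-neighborhood of $\alpha$ inside $h^{-1}(a)$ (and analogously at $\beta$), yields $h(\alpha+) = a$ and $h(\beta-) = b$ after a careful case analysis at the endpoints. Theorem~\ref{existence of Stietjes measure} then gives $\D h(\opint{\alpha,\beta}) = h(\beta-) - h(\alpha+) = b - a = \leb^1(\opint{a,b})$. Both Borel measures $B \mapsto \D h(h^{-1}(B) \cap \cont(h))$ and $B \mapsto \leb^1(B)$ thus coincide on a $\pi$-system generating the Borel $\sigma$-algebra of $h(\cont(h))$; a monotone-class / Carath\'eodory uniqueness argument, followed by completion, extends the identity to all Lebesgue measurable $B \subseteq h(\cont(h))$.

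For (ii), the bound $\V(f \circ h, I) \le \Lipcost(f)\,\V(h, I) < \infty$ gives $f \circ h \in \BV(I;\H)$, so $\D(f \circ h)$ exists as a vector measure. I would verify $\D(f \circ h) = g\,\D h$ by splitting the domain. On the countable set $\discont(h)$, each atom $t$ satisfies $\D h(\{t\}) = h(t+) - h(t-)$ and, by continuity of $f$, $\D(f \circ h)(\{t\}) = f(h(t+)) - f(h(t-))$; the jump-branch of the definition of $g$ turns this into exactly $g(t)\,\D h(\{t\})$. On $\cont(h)$ both measures are atomless, and for any $\opint{s,t}$ with $s,t \in \cont(h)$ the fundamental theorem of calculus for the Lipschitz function $f$ yields
\[
  \D(f \circ h)(\opint{s,t}) = f(h(t)) - f(h(s)) = \int_{h(s)}^{h(t)} f'(u)\,\de u,
\]
and applying (i) transforms the right-hand side into $\int_{\opint{s,t} \cap \cont(h)} f'(h(\tau))\,\de\D h(\tau) = \int_{\opint{s,t}} g\,\de\D h$. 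A density argument (characteristic $\to$ simple $\to$ $\L^1$) shows that the particular $\leb^1$-representative of $f'$ does not matter, because (i) makes $h_*(\D h|_{\cont(h)})$ absolutely continuous with respect to $\leb^1$. Uniqueness of the Stieltjes measure then upgrades the equality on intervals to the global identity $\D(f \circ h) = g\,\D h$.

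The main obstacle is the endpoint bookkeeping in (i): one must choose interval conventions (open versus closed) so that flat plateaus of $h$ contribute zero $\D h$-mass, and one must handle carefully the possibility that the auxiliary endpoints $\alpha, \beta$ themselves lie in $\discont(h)$, a situation where the continuity hypotheses on $s, t$ do not directly control the limits $h(\alpha+)$ and $h(\beta-)$. A secondary subtlety in (ii) is justifying the change of variables for the merely $\leb^1$-a.e.\ defined derivative $f'$; this rests precisely on the pushforward identity of (i), so part (i) is genuinely the engine driving the chain rule, not merely a convenient lemma.
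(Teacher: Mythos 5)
The paper never proves this proposition --- it is quoted from \cite{Rec11a} (Lemma 6.4 and Theorem 6.1) --- so your argument has to stand on its own, and as written it does not. The endpoint difficulty you flag at the end of part (i) is not bookkeeping: it defeats your interval step and, in fact, the statement read literally with the full preimage. Take $I=\clint{0,2}$, $h\equiv 0$ on $\clint{0,1}$ and $h(t)=t$ for $t\in\cldxint{1,2}$, so that $\discont(h)=\{1\}$, $h(1-)=h(1)=0$, $h(1+)=1$, and $h(\cont(h))=\{0\}\cup\cldxint{1,2}$. With $s=1/2$, $t=3/2$ (continuity points), $a=0$, $b=3/2$, your $\alpha$ equals $1\in\discont(h)$ and $h(\alpha+)=1\neq a$; correspondingly $\D h(h^{-1}(\opint{a,b}))=\D h(\opint{1,3/2})=1/2\neq 3/2=\leb^1(\opint{a,b})$, because the atom of $\D h$ at $1$ sits over the value $h(1)=a$, outside $\opint{a,b}$, while the range gap $\opint{0,1}$ lies inside it; no case analysis recovers $h(\alpha+)=a$. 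The same example shows the identity with the full preimage fails even for $B\subseteq h(\cont(h))$: for $B=\{0\}$ one gets $\D h(h^{-1}(B))=\D h(\clint{0,1})=h(1+)-h(0)=1\neq 0=\leb^1(B)$. What is true (presumably the intended statement, and all that the application in the proof of Theorem \ref{main thm} needs, since the set estimated there lies in $\cont(u)$) is $\D h(h^{-1}(B)\cap\cont(h))=\leb^{1}(B)$ for $B\in\lebsets(h(\cont(h)))$, i.e.\ the image under $h$ of the restriction of $\D h$ to $\cont(h)$ is $\leb^1$ restricted to $h(\cont(h))$. Your monotone-class step also mixes the two set functions: the interval computation (where it holds) concerns $B\mapsto\D h(h^{-1}(B))$, yet you then assert agreement of $B\mapsto\D h(h^{-1}(B)\cap\cont(h))$ with $\leb^{1}$ on the same intervals; when $h$ jumps these differ on every interval straddling a range gap (the discarded atom is exactly what compensated the gap), and such intervals are not subsets of $h(\cont(h))$, so agreement on them does not pass to the trace $\sigma$-algebra you need. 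A workable route is to compare the two finite Borel measures $B\mapsto\D h(h^{-1}(B)\cap\cont(h))$ and $B\mapsto\leb^{1}(B\cap h(\cont(h)))$ on half-lines $\{\tau\in J:\ \tau\le\sigma\}$, where both sides can be computed directly, and then to complete (note that for a merely Lebesgue measurable $B$ the set $h^{-1}(B)$ need not belong to $\lebsets(I)$ at all, so the completion of $\D h$ is what gives the left-hand side a meaning).

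In part (ii) the appeal to (i) is made exactly where it does not apply. If $h$ jumps at some $r\in\opint{s,t}$, then $\clint{h(s),h(t)}$ is not contained in $h(\cont(h))$, so (i) does not convert $\int_{h(s)}^{h(t)}f'\de\leb^{1}$ into $\int_{\opint{s,t}\cap\cont(h)}f'(h(\tau))\de\D h(\tau)$; in the example above with $f=\Id$ the three members of your displayed chain equal $3/2$, $1/2$ and $3/2$, so each written equality fails even though the two outer quantities coincide. The sound bookkeeping splits $\clint{h(s),h(t)}$ into $h(\cont(h))$ and the gaps $\opint{h(r-),h(r+)}$, applies the corrected (i) to the first piece and the fundamental theorem of calculus on each gap, and identifies the gap terms with the atoms $g(r)\,\D h(\{r\})$, $r\in\opint{s,t}\cap\discont(h)$; your separate atom computation then becomes part of this identity rather than an independent check, which also removes the double-counting ambiguity in ``splitting the domain''. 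Two smaller points: almost everywhere differentiability of the Lipschitz map $f$ and the vector-valued fundamental theorem of calculus should be justified via the Radon--Nikodym property of $\H$, and the representative-independence argument is fine only once (i) is stated in the restricted form. In summary, your architecture --- (i) as a pushforward identity, (ii) by change of variables plus a jump computation --- is the right one, but the interval step and the extension step of (i) break down precisely in the plateau-next-to-jump configuration you yourself single out, and the invocation of (i) inside (ii) silently drops the jump contributions.
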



\section{Statement of the main result}\label{S:state main result}

In this section we state the main theorems of the present paper. We assume that

\begin{gather}
   \Z \in \Conv_\H, \qquad 0 \in \Z, \label{Z closed convex-results}  \\
   T \in \opint{0,\infty}. \label{T>0-results}
\end{gather}

\begin{Thm}\label{main thm}
Assume that $u \in \BV(\clint{0,T};\H)$ and $z_0 \in \Z$. Then there exists a unique $y \in \BV(\clint{0,T};\H)$ such that  
\begin{alignat}{3}
  & u(t) - y(t) \in \Z & \qquad & \forall t \in \clint{0,T}, \label{BV constraint for P} \\
  & \int_{\cont(u)} \duality{z(t) - u(t) + y(t)}{\de\D y(t)} \le 0 & \quad & 
                       \text{$\forall z \in \L^\infty(\clint{0,T};\H)$, $z(\clint{0,T}) \subseteq \Z$}, 
      \label{BV int ineq for P} \\
  & u(t) - y(t) = \Proj_{\Z}(u(t) - y(t-)) & \qquad &  \forall t \in \discont(\C) \setmeno \{0\}, \label{BV jump cond for P-} \\
  &  u(t+) - y(t+) = \Proj_{\Z}(u(t+) - y(t)) & \qquad &  \forall t \in \discont(\C), 
       \label{BV jump cond for P+} \\
  & u(0) - y(0) = z_0. \label{BV i.c. for P}
\end{alignat}
Moreover $y$ is left continuous (respectively: right continuous) at $t \in \clint{0,T}$ if and only if $u$ is left continuous (respectively: right continuous) at $t \in \clint{0,T}$.
\end{Thm}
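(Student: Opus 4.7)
The plan is to reduce the problem to a Lipschitz continuous sweeping process by reparametrizing the moving set $\C_u(t) = u(t) - \Z$ according to arc length and inserting, at each $t \in \discont(u)$, the two geodesic arcs $\G_{t-}$, $\G_{t+}$ defined in \eqref{G-}--\eqref{G+}. To this end I set $V(t) := \pV(u,\clint{0,t})$ and $\psi(t) := t + V(t)$, so that $\psi$ is strictly increasing with $\psi(t) - \psi(t-) = \norm{u(t)-u(t-)}{}$ and $\psi(t+) - \psi(t) = \norm{u(t+)-u(t)}{}$. Setting $S := T + V(T)$, the image of $\psi$ in $\clint{0,S}$ is the complement of the union of open intervals $\opint{\psi(t-),\psi(t)}$ and $\opint{\psi(t),\psi(t+)}$ for $t \in \discont(u)$. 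On the image of $\psi$ I set $\widetilde\C(\psi(t)) := u(t) - \Z$, and on each missing interval I let $\widetilde\C$ traverse the corresponding geodesic $\G_{t-}$ or $\G_{t+}$ at unit speed. The defining intersection formula of the geodesics and the Hilbert structure then make $\widetilde\C \in \Lip(\clint{0,S};\Conv_\H)$ with Lipschitz constant~$1$.

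Next I invoke the classical theory of Moreau's Lipschitz sweeping process (cf.\ \cite{Mor77}) to obtain a unique $\widetilde y \in \W^{1,\infty}(\clint{0,S};\H)$ satisfying $\widetilde y(s) \in \widetilde\C(s)$ for all $s$, the differential inclusion $\widetilde y'(s) \in -N_{\widetilde\C(s)}(\widetilde y(s))$ for $\leb^1$-a.e.\ $s$, and the initial condition $\widetilde y(0) = u(0) - z_0$. I then define $y(t) := \widetilde y(\psi(t))$ for $t \in \clint{0,T}$, so that $y(t\pm) = \widetilde y(\psi(t\pm))$ by continuity of $\widetilde y$. Proposition~\ref{P:BV chain rule} applied to the nondecreasing $\psi$ gives $y \in \BV(\clint{0,T};\H)$ and a chain rule expressing $\D y$ in terms of $\widetilde y'$ on $\cont(u)$ and of finite-difference quotients on $\discont(u)$.

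The verification of the four conditions is the heart of the proof. The pointwise constraint \eqref{BV constraint for P} and the initial condition \eqref{BV i.c. for P} are direct. The jump conditions \eqref{BV jump cond for P-}--\eqref{BV jump cond for P+} encode the geometric content of the choice \eqref{G-}--\eqref{G+}: on the inserted interval $\clint{\psi(t-),\psi(t)}$ the Lipschitz sweeping process along the geodesic $\G_{t-}$ drives $\widetilde y$ from $\widetilde y(\psi(t-))$ to the closest point of the terminal set $u(t)-\Z$, so that $\widetilde y(\psi(t)) = \Proj_{u(t)-\Z}(\widetilde y(\psi(t-)))$, which is exactly \eqref{BV jump cond for P-}; the analogous computation along $\G_{t+}$ gives \eqref{BV jump cond for P+}. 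For the integral inequality \eqref{BV int ineq for P}, I take an arbitrary $z \in \L^\infty(\clint{0,T};\H)$ with $z(\clint{0,T}) \subseteq \Z$, write the corresponding Lipschitz variational inequality for $\widetilde y$ on $\psi(\cont(u))$ against the composition $s \mapsto z(\psi^{-1}(s))$, and change variables $s = \psi(t)$ by means of Proposition~\ref{P:BV chain rule}(i) together with \eqref{f de gnu =fg de nu} to transfer the integral back to $\cont(u)$.

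For uniqueness, given two solutions $y_1,y_2$, substituting $z := u - y_2$ in the inequality for $y_1$ and $z := u - y_1$ for $y_2$ and adding yields $\int_{\cont(u)}\duality{y_1(t)-y_2(t)}{\de\D(y_1-y_2)} \le 0$; combined with a standard product-rule computation (valid since $y_1,y_2$ are continuous on $\cont(u)$ by the jump conditions), this bounds the continuous part of $t \mapsto \norm{y_1(t)-y_2(t)}{}^2$ by $0$. At each $t \in \discont(u)$, nonexpansivity of $\Proj_\Z$ applied to \eqref{BV jump cond for P-}--\eqref{BV jump cond for P+} gives $\norm{y_1(t)-y_2(t)}{} \le \norm{y_1(t-)-y_2(t-)}{}$ and $\norm{y_1(t+)-y_2(t+)}{} \le \norm{y_1(t)-y_2(t)}{}$, so the full map is nonincreasing and the common initial condition forces $y_1 \equiv y_2$. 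Finally, if $u$ is left continuous at $t$, the constraint at $t-$ (obtained by closedness of $\Z$) gives $u(t) - y(t-) = u(t-) - y(t-) \in \Z$, rendering the projection in \eqref{BV jump cond for P-} trivial and forcing $y(t) = y(t-)$; the right continuous case is symmetric via \eqref{BV jump cond for P+}. I expect the main obstacle to lie in the geometric verification that the Moreau process along each geodesic arc \eqref{G-}--\eqref{G+} produces exactly the projection formulas above, which is the key observation distinguishing this construction from the naive straight-segment joining of jumps and the reason why the resulting formulation \eqref{BV constraint for P}--\eqref{BV i.c. for P} is compatible with \eqref{intro constr for P-BV}--\eqref{intro i.c. for P-BV} in the left continuous case.
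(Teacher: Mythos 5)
Your construction is essentially the paper's own: you reparametrize $\C_u=u-\Z$ by arc length (the paper uses the normalized arc length $\ell_u$, you use $t\mapsto t+\pV(u,\clint{0,t})$, an immaterial difference), fill the two jumps at each $t\in\discont(u)$ with the geodesics \eqref{G-}--\eqref{G+}, solve the resulting Lipschitz sweeping process (Theorem \ref{T:existence general Lipsweep}), pull back through the chain rule of Proposition \ref{P:BV chain rule}, and prove uniqueness by a monotonicity estimate on $\cont(u)$ combined with nonexpansivity of $\Proj_\Z$ at the jumps. Two steps, however, are left genuinely open.

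First, the fact you yourself defer as ``the main obstacle'' --- that the Moreau process driven by $\G_{(u_0-\Z,\,u_1-\Z)}$ starting from $y_0\in u_0-\Z$ reaches exactly $\Proj_{u_1-\Z}(y_0)$ at the final time --- is asserted but not proved; it is precisely the content of Lemma \ref{particular sweep} (the explicit solution \eqref{sol. of the particular sweeping pb}, quoted from \cite{Rec16a}), and without it the jump conditions \eqref{BV jump cond for P-}--\eqref{BV jump cond for P+} do not follow from your construction. You also use tacitly the semigroup property \eqref{semigroup} to localize the global solution $\widetilde y$ to each inserted interval; this should be made explicit. Both points are fixable by proving or citing the lemma, but as written the crux of the existence half is missing.

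Second, in the uniqueness argument the claim that $y_1,y_2$ are continuous at every point of $\cont(u)$ ``by the jump conditions'' is not justified: \eqref{BV jump cond for P-}--\eqref{BV jump cond for P+} constrain a solution only at $t\in\discont(u)$ and say nothing at continuity points of $u$. In fact, conditions \eqref{BV constraint for P}--\eqref{BV i.c. for P} by themselves do not exclude an atom of $\D y$ inside $\cont(u)$: take $\H=\ar$, $\Z=\clint{-1,1}$, $u\equiv0$, $z_0=1$, and $y$ equal to $-1$ on $\clint{0,t_0}$ and to some $c\in\clint{-1,1}$ afterwards; since the integrand in \eqref{BV int ineq for P} is evaluated at $y(t_0)=y(t_0-)=-1$, the atomic contribution is $(z(t_0)-1)(c+1)\le0$ and all the listed conditions hold. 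Consequently the ``product rule'' bound $\D\norm{y_1-y_2}{}^2\le 2\duality{y_1-y_2}{\D(y_1-y_2)}$ with pointwise values can fail at such an atom, and your estimate needs either the one-sided-limit version of Moreau's differential-measure inequality \cite{Mor76} together with a separate treatment of possible atoms in $\cont(u)$, or a restriction of the uniqueness class to functions with $\discont(y)\subseteq\discont(u)$ (which is what the ``moreover'' clause encodes and what the paper's own computation with \cite{Mor76} implicitly relies on); it cannot be dispatched by appealing to the jump conditions. The same caveat applies to your final paragraph: the argument via \eqref{BV jump cond for P-} gives continuity of $y$ only at $t\in\discont(u)$ with one-sided continuity of $u$, while at $t\in\cont(u)$ the continuity of the constructed solution comes from $y=\widetilde y\circ\psi$, not from the conditions themselves; and you do not address the converse implication stated in the theorem.
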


The following result shows that in the left continuous case the three conditions 
\eqref{BV int ineq for P}--\eqref{BV jump cond for P+} reduce to the single one
\begin{equation}\label{single integral condition}
\int_{\clint{0,T}} \duality{z(t) - u(t+) + y(t+)}{\de\D y(t)} \le 0 \quad  
                       \text{$\forall z \in \L^\infty(\clint{0,T};\H)$, $z(\clint{0,T}) \subseteq \Z$}.
\end{equation}

\begin{Thm}\label{main-left continuous case}
Assume that $u \in \BV^\l(\clint{0,T};\H)$ and $z_0 \in \Z$, and let $y \in \BV(\clint{0,T};\H)$ be the unique function satisfying \eqref{BV constraint for P}--\eqref{BV i.c. for P}. Then $y$ is left continuous and we have
\begin{alignat}{3}
  & u(t) - y(t) \in \Z & \qquad & \forall t \in \clint{0,T}, \label{BV-l constraint for P} \\
  & \int_{\clint{0,T}} \duality{z(t) - u(t+) + y(t+)}{\de\D y(t)} \le 0 & \quad & 
                       \text{$\forall z \in \L^\infty(\clint{0,T};\H)$, $z(\clint{0,T}) \subseteq \Z$}, 
      \label{BV-l int ineq for P} \\
  & u(0) - y(0) = z_0. \label{BV-l i.c. for P}
\end{alignat}
\end{Thm}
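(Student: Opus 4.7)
The plan is as follows. The left continuity of $y$ is immediate from the last sentence of Theorem \ref{main thm}: since $u \in \BV^\l(\clint{0,T};\H)$ is left continuous at every $t \in \clint{0,T}$, so is $y$, hence $y \in \BV^\l(\clint{0,T};\H)$, and in particular $\discont(y) \subseteq \discont(u)$ with $\D y(\{t\}) = y(t+) - y(t-) = y(t+) - y(t)$ at every atom $t$. Conditions \eqref{BV-l constraint for P} and \eqref{BV-l i.c. for P} coincide literally with \eqref{BV constraint for P} and \eqref{BV i.c. for P}, so the only thing to establish is \eqref{BV-l int ineq for P}. Observe also that \eqref{BV jump cond for P-} becomes automatic in this setting: $y(t-) = y(t)$, and since $u(t) - y(t) \in \Z$ by \eqref{BV constraint for P}, we have $\Proj_\Z(u(t) - y(t)) = u(t) - y(t)$.

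The key step is to decompose $\D y = \D y_c + \D y_a$ into its non-atomic part $\D y_c$, which is supported on $\cont(y) = \cont(u)$, and its atomic part $\D y_a$, supported on the (at most countable) set $\discont(y) \subseteq \discont(u)$ with $\D y_a(\{t\}) = y(t+) - y(t)$. For any admissible test function $z$, since $u(t+) = u(t)$ and $y(t+) = y(t)$ on $\cont(u)$, we can split
\begin{equation}\notag
  \int_{\clint{0,T}} \duality{z(t) - u(t+) + y(t+)}{\de \D y(t)}
  = \int_{\cont(u)} \duality{z(t) - u(t) + y(t)}{\de \D y(t)}
     + \sum_{t \in \discont(u)} \duality{z(t) - u(t+) + y(t+)}{y(t+) - y(t)}.
\end{equation}
The first summand is $\le 0$ by \eqref{BV int ineq for P}.

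For the discrete sum, I would use \eqref{BV jump cond for P+} together with the variational characterization of the projection recalled in Section \ref{S:Preliminaries}: $u(t+) - y(t+) = \Proj_\Z(u(t+) - y(t))$ is equivalent to $u(t+) - y(t+) \in \Z$ and
\begin{equation}\notag
  \bduality{(u(t+) - y(t)) - (u(t+) - y(t+))}{w - (u(t+) - y(t+))} \le 0 \qquad \forall w \in \Z,
\end{equation}
i.e. $\duality{y(t+) - y(t)}{w - u(t+) + y(t+)} \le 0$ for every $w \in \Z$. Taking $w = z(t) \in \Z$ shows that each term of the discrete sum is non-positive, and summing yields \eqref{BV-l int ineq for P}.

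The only non-routine ingredient is the decomposition step and the observation that \eqref{BV jump cond for P+} is nothing but the projection characterization written in terms of the jump of $y$; both are short. The real content has already been absorbed in the proof of Theorem \ref{main thm}, so Theorem \ref{main-left continuous case} should unfold essentially by bookkeeping on atoms.
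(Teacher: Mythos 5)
Your argument is correct and is essentially the paper's own proof: both split the integral over $\clint{0,T}$ into the part over $\cont(u)$, controlled by \eqref{BV int ineq for P}, plus the atomic sum over $\discont(u)$, where each term is nonpositive because \eqref{BV jump cond for P+} is exactly the variational characterization of the projection, i.e. $\duality{z - u(t+) + y(t+)}{y(t+)-y(t)} \le 0$ for all $z \in \Z$. Your extra remarks (triviality of \eqref{BV jump cond for P-} and the explicit atomic/non-atomic decomposition of $\D y$) are harmless bookkeeping that the paper leaves implicit.
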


As a consequence we obtain that in the left continuous case our general formulation is equivalent to the well known formulation of the play operator provided in \cite{KreLau02}, namely \eqref{BV-l constraint for P} and \eqref{BV-l i.c. for P} together with
\begin{equation}\label{KreLau integral formulation}
\int_{\clint{0,T}} \duality{z(t) - u(t+) + y(t+)}{\de y(t)} \le 0 \quad  
                       \text{$\forall z \in \L^\infty(\clint{0,T};\H)$, $z(\clint{0,T}) \subseteq \Z$},
\end{equation}
where the integral in \eqref{KreLau integral formulation} is meant in the sense of Young (cf. \cite[Section 3]{KreLau02}).
Indeed in \cite[Section A4]{Rec11a} we showed that the two integrals in \eqref{single integral condition} and in \eqref{KreLau integral formulation} coincide.

An analogous result holds in the right continuous case:

\begin{Thm}\label{main-right continuous case}
Assume that $u \in \BV^\r(\clint{0,T};\H)$ and $z_0 \in \Z$, and let $y \in \BV(\clint{0,T};\H)$ be the unique function satisfying \eqref{BV constraint for P}--\eqref{BV i.c. for P}. Then $y$ is right continuous and we have
\begin{alignat}{3}
  & u(t) - y(t) \in \Z & \qquad & \forall t \in \clint{0,T}, \label{BV-r constraint for P} \\
  & \int_{\clint{0,T}} \duality{z(t) - u(t) + y(t)}{\de\D y(t)} \le 0 & \quad & 
                       \text{$\forall z \in \L^\infty(\clint{0,T};\H)$, $z(\clint{0,T}) \subseteq \Z$}, 
      \label{BV-r int ineq for P} \\
  & u(0) - y(0) = z_0. \label{BV-r i.c. for P}
\end{alignat}
\end{Thm}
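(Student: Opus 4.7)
The plan is to apply Theorem \ref{main thm} to the given $u$ and $z_0$, obtaining the unique $y \in \BV(\clint{0,T};\H)$ that satisfies \eqref{BV constraint for P}--\eqref{BV i.c. for P}. From the final assertion of Theorem \ref{main thm}, the right continuity of $u$ transfers to $y$, so $y(t+) = y(t)$ for every $t \in \clint{0,T}$; in particular \eqref{BV-r constraint for P} and \eqref{BV-r i.c. for P} coincide with \eqref{BV constraint for P} and \eqref{BV i.c. for P}, and only the integral inequality \eqref{BV-r int ineq for P} requires work. Note moreover that $\discont(u) = \discont(y)$ is at most countable.

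I would then split the integration domain as $\clint{0,T} = \cont(u) \cup \discont(u)$. The contribution over $\cont(u)$ is controlled directly by \eqref{BV int ineq for P}. Since the atomic part of $\D y$ is supported on $\discont(y) = \discont(u)$, the remaining portion reduces to the (at most countable) sum
\[
  \sum_{t \in \discont(u)} \duality{z(t) - u(t) + y(t)}{\D y(\{t\})}.
\]
Using Theorem \ref{existence of Stietjes measure}, the right continuity of $y$, and the convention $y(0-) := y(0)$, the atom satisfies $\D y(\{t\}) = y(t+) - y(t-) = y(t) - y(t-)$ for $t \in \discont(u) \setmeno \{0\}$, and $\D y(\{0\}) = 0$ when $0 \in \discont(u)$, so that point contributes nothing.

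To finish I would control each surviving summand through the jump condition \eqref{BV jump cond for P-}. Setting $p := u(t) - y(t)$ and $q := u(t) - y(t-)$, the identity $p = \Proj_\Z(q)$ together with the variational characterization of the projection recalled in Section \ref{S:Preliminaries} gives $\duality{q - p}{v - p} \le 0$ for every $v \in \Z$. Since $q - p = y(t) - y(t-)$, the admissible choice $v = z(t) \in \Z$ yields
\[
  \duality{z(t) - u(t) + y(t)}{y(t) - y(t-)} \le 0,
\]
and summing these nonpositive contributions with the inequality already available on $\cont(u)$ produces exactly \eqref{BV-r int ineq for P}. The only mildly delicate aspect is the measure-theoretic bookkeeping separating the atomic and continuous parts of $\D y$, but this is handled at once by the identification $\discont(y) = \discont(u)$ furnished by Theorem \ref{main thm}; no substantial obstacle is anticipated.
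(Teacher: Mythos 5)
Your proposal is correct and follows essentially the same route as the paper: split the integral into the part over $\cont(u)$, controlled by \eqref{BV int ineq for P}, and the atomic contributions at $\discont(u)$, each of which is nonpositive because \eqref{BV jump cond for P-} combined with the variational characterization of $\Proj_\Z$ gives $\duality{z(t)-u(t)+y(t)}{y(t)-y(t-)}\le 0$, while right continuity of $y$ identifies the atom $\D y(\{t\})$ with $y(t)-y(t-)$. Your extra remark about $t=0$ is harmless (in fact $0\in\cont(u)$ automatically for right continuous $u$, by the convention $u(0-):=u(0)$), so no further changes are needed.
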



\section{Proofs}\label{S:Proofs}

We start by recalling the following result about sweeping processes.

\begin{Thm}\label{T:existence general Lipsweep}
If $\C \in \Lip(\clint{0,T};\Conv_\H)$ and $y_0 \in \H$, then there is a unique $y \in \Lip(\clint{0,T};\H)$ such that 
\begin{alignat}{3}
  & y(t) \in \C(t) & \qquad & \forall t \in \clint{0,T}, \label{y in C - Lip-sweep} \\
  & y'(t) + N_{\C(t)}(y(t)) \ni 0 & \qquad & \text{for $\leb^1$-a.e. $t \in \clint{0,T}$}, \label{diff. incl. - Lipsweep} \\
  & y(0) = \Proj_{\C(0)}(y_{0}). \label{in. cond. - Lipsweep}
\end{alignat}
If we set $\M(y_0,\C) := y$ then we have defined a solution operator 
$\M : \Lip(\clint{0,T};\Conv_\H) \times \H$ $\function$ $\Lip(\clint{0,T};\H)$ assigning to $(\C,y_0)$ the unique function
$y$ satisfying \eqref{y in C - Lip-sweep}--\eqref{in. cond. - Lipsweep}. This operator satisfies the semigroup property
\begin{equation}\label{semigroup}
  \M(\C,y_0)(t) = \M(\C(\cdot + s), \M(\C,y_0)(s))(t-s) \qquad \forall t, s, \ 0 \le s \le t,
\end{equation}
where $\C(\cdot + s) : \clint{0,T-s} \function \Conv_\H$ is defined by $\C(\cdot + s)(t) := \C(t + s)$, for $t \in \clint{0,T-s}$.
\end{Thm}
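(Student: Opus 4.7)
My plan is three steps: uniqueness by a monotonicity argument, existence via Moreau's catching-up algorithm, and the semigroup property as an immediate consequence of uniqueness.

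For uniqueness, suppose $y_1, y_2 \in \Lip(\clint{0,T};\H)$ both solve \eqref{y in C - Lip-sweep}--\eqref{in. cond. - Lipsweep}. At a.e.\ $t$ one has $y_1(t), y_2(t) \in \C(t)$ and $-y_i'(t) \in N_{\C(t)}(y_i(t))$, so the monotonicity of the normal cone of the fixed set $\C(t)$ yields $\duality{y_1'(t)-y_2'(t)}{y_1(t)-y_2(t)} \le 0$. Hence $t \mapsto \norm{y_1(t)-y_2(t)}{}^2$ is nonincreasing, and since $y_1(0)=y_2(0)=\Proj_{\C(0)}(y_0)$ one concludes $y_1 \equiv y_2$.

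For existence, I would use the catching-up algorithm: fix partitions $0 = t_0^n < \cdots < t_{k_n}^n = T$ with mesh $\delta_n \to 0$, set $y^n(t_0^n) := \Proj_{\C(0)}(y_0)$, iterate $y^n(t_{j+1}^n) := \Proj_{\C(t_{j+1}^n)}(y^n(t_j^n))$, and extend $y^n$ by piecewise linear interpolation on the partition. Since $y^n(t_j^n) \in \C(t_j^n)$,
\begin{equation}\notag
  \norm{y^n(t_{j+1}^n) - y^n(t_j^n)}{}
  = \d(y^n(t_j^n),\C(t_{j+1}^n))
  \le \d_{\hausd}(\C(t_j^n),\C(t_{j+1}^n))
  \le \Lipcost(\C)\,(t_{j+1}^n - t_j^n),
\end{equation}
so the $y^n$ are uniformly $\Lipcost(\C)$-Lipschitz. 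The technical heart of the argument, which I expect to be the main obstacle, is to show that $(y^n)$ is Cauchy in $\Czero(\clint{0,T};\H)$. One uses that each increment $y^n(t_{j+1}^n) - y^n(t_j^n)$ lies in $-N_{\C(t_{j+1}^n)}(y^n(t_{j+1}^n))$ by the variational characterization of the projection, and mimics the monotonicity computation from uniqueness between $y^n$ and $y^m$, absorbing the error terms of order $\delta_n + \delta_m$ via a Gronwall-type inequality; this is the classical Moreau estimate. Denoting by $y$ the uniform limit one has $\Lipcost(y) \le \Lipcost(\C)$, and $y(t) \in \C(t)$ by Hausdorff continuity of $\C$. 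The inclusion \eqref{diff. incl. - Lipsweep} follows by passing to the limit in the discrete variational inequality $\duality{v - y^n(t_{j+1}^n)}{y^n(t_{j+1}^n) - y^n(t_j^n)} \ge 0$, valid for every $v \in \C(t_{j+1}^n)$, tested against Lipschitz selections $v(t)$ of $\C(t)$ (produced by $t \mapsto \Proj_{\C(t)}(x)$ for arbitrary $x \in \H$, which is Lipschitz); the initial condition \eqref{in. cond. - Lipsweep} holds by construction.

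Finally, the semigroup property \eqref{semigroup} follows from uniqueness: for fixed $s \in \clint{0,T}$, the shifted trajectory $t \mapsto \M(\C,y_0)(t+s)$ on $\clint{0,T-s}$ is Lipschitz, takes values in $\C(\cdot+s)$, satisfies the normal-cone inclusion at a.e.\ $t$ (time derivatives are preserved under translation), and starts at $\M(\C,y_0)(s) \in \C(s)$, which equals $\Proj_{\C(s)}(\M(\C,y_0)(s))$ since it already lies in $\C(s)$. By uniqueness applied to the sweeping process driven by $\C(\cdot+s)$ with initial datum $\M(\C,y_0)(s)$, it coincides with $\M(\C(\cdot+s), \M(\C,y_0)(s))$.
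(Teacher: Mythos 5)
Your proposal is correct and follows essentially the same route as the paper's source for this result: the paper gives no proof of its own but cites Moreau (\cite[Section 3]{Mor71}, \cite[Proposition 3c]{Mor77}), whose argument is precisely the catching-up discretization with the Moreau--Gronwall Cauchy estimate, monotonicity of the normal cone for uniqueness, and the semigroup property as a consequence of uniqueness. The only part you leave as a sketch (the Cauchy estimate for the discrete trajectories and the passage to the limit in the discrete variational inequality) is the standard technical core of that classical proof and works as you describe.
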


The proof of the previous theorem can be found in \cite[Section 3]{Mor71} (see also \cite[Proposition 3c]{Mor77} for a more general setting).

In the following proposition we recall a family of curves with values in $\Conv_\H$ which play a special role 
in sweeping processes (cf. \cite[Proposition 4.1]{Rec16a} and \cite[Theorem 1]{Ser98}).

\begin{Prop}
If we set
\begin{equation}
  D_\rho := \{u \in \H \ :\ \norm{u}{} \le \rho\}, \qquad \rho > 0,
\end{equation}
then for every pair $\A,\B \in \Conv_\H$ with $\A \neq \B$ and $\rho := \d_\hausd(\A,\B) < \infty$, we define the function
$\G_{\A,\B} : \clint{0,1} \function \Conv_\H$ by setting
\begin{equation}\label{catching-up geodesic-2}
  \G_{(\A,\B)}(t) := (\A + D_{t\rho}) \cap (\B + D_{(1-t)\rho}), \quad t \in \clint{0,1}.
\end{equation}
Then $\G_{(\A,\B)} \in \Lip(\clint{0,1};\Conv_\H)$ and it is a geodesic connecting $\A$ and $\B$ in $\Conv_\H$, i.e. 
$\G_{(\A,\B)}(0) = \A$, $\G_{(\A,\B)}(1) = \B$ and $\pV(\G_{(\A,\B)}, \clint{0,1}) = \d_\hausd(\A,\B)$.
\end{Prop}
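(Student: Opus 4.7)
The plan is to establish, in order, that each $\G(t) := \G_{(\A,\B)}(t)$ lies in $\Conv_\H$, that the boundary identities $\G(0) = \A$ and $\G(1) = \B$ hold, that $\G$ is $\rho$-Lipschitz from $\clint{0,1}$ to $(\Conv_\H, \d_\hausd)$, and finally that $\pV(\G,\clint{0,1}) = \rho$.

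For each $t \in \clint{0,1}$, both factors $\A + D_{t\rho}$ and $\B + D_{(1-t)\rho}$ are convex, and closed in the Hilbert space $\H$ via a weak-compactness argument (if $a_n + \alpha_n \to x$ with $\norm{\alpha_n}{} \le t\rho$, a weakly convergent subsequence $\alpha_{n_k} \convergedeb \alpha \in D_{t\rho}$ forces $a_{n_k} \convergedeb x - \alpha$, which lies in $\A$ by Mazur's theorem). Nonemptiness is seen by picking $a \in \A$, setting $b := \Proj_\B(a)$ with $\norm{a-b}{} \le \sup_{a' \in \A}\d(a',\B) \le \rho$, and checking that $(1-t)a + tb$ belongs to both factors. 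The boundary conditions are then immediate: $\d_\hausd(\A,\B) = \rho$ gives $\A \subseteq \B + D_\rho$ and hence $\G(0) = \A \cap (\B + D_\rho) = \A$; symmetrically, $\G(1) = \B$.

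The core step is the Lipschitz estimate $\d_\hausd(\G(s),\G(t)) \le (t-s)\rho$ for $0 \le s < t \le 1$. Given $x \in \G(s)$, I set $b := \Proj_\B(x)$ (so $\norm{x-b}{} \le (1-s)\rho$), $\lambda := (t-s)/(1-s)$, and $y := (1-\lambda)x + \lambda b$. Writing $x = a_1 + \alpha_1$ with $a_1 \in \A$, $\norm{\alpha_1}{} \le s\rho$, and $b = a_2 + \alpha_2$ with $a_2 \in \A$, $\norm{\alpha_2}{} \le \rho$ (using $\B \subseteq \A + D_\rho$), one obtains
\[
  y = \bigl[(1-\lambda)a_1 + \lambda a_2\bigr] + \bigl[(1-\lambda)\alpha_1 + \lambda\alpha_2\bigr],
\]
where the first bracket lies in $\A$ by convexity and the second has norm at most $(1-\lambda)s\rho + \lambda\rho = t\rho$, so $y \in \A + D_{t\rho}$; simultaneously $\norm{y-b}{} = (1-\lambda)\norm{x-b}{} \le (1-t)\rho$, so $y \in \B + D_{(1-t)\rho}$. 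Hence $y \in \G(t)$ and $\norm{x-y}{} = \lambda\norm{x-b}{} \le (t-s)\rho$. The symmetric construction with $\mu := (t-s)/t$, projecting onto $\A$ and interpolating, produces for every $x \in \G(t)$ a point of $\G(s)$ within distance $(t-s)\rho$, completing the Hausdorff bound.

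Lipschitz continuity immediately gives $\pV(\G,\clint{0,1}) \le \rho$, while the trivial one-step partition $\{0,1\}$ forces $\pV(\G,\clint{0,1}) \ge \d_\hausd(\G(0),\G(1)) = \rho$, so equality holds and $\G$ is a geodesic. The hard part of the argument is the Lipschitz step: the interpolation parameter $\lambda = (t-s)/(1-s)$ is the unique one that simultaneously saturates both constraints $y \in \A + D_{t\rho}$ and $y \in \B + D_{(1-t)\rho}$, and the whole proof hinges on the lucky fact that this single choice meets both conditions at once.
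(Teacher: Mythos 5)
Your argument is correct, and all the key computations check out: the interpolation parameter $\lambda=(t-s)/(1-s)$ indeed gives $(1-\lambda)s+\lambda=t$ and $(1-\lambda)(1-s)=1-t$, so the single point $y=(1-\lambda)x+\lambda\Proj_\B(x)$ simultaneously lies in $\A+D_{t\rho}$ and $\B+D_{(1-t)\rho}$ while $\norm{x-y}{}\le(t-s)\rho$; the symmetric step with $\mu=(t-s)/t$ works the same way, and the variation count ($\pV\le\rho$ from the Lipschitz bound, $\pV\ge\d_\hausd(\G(0),\G(1))=\rho$ from the one-step partition) closes the proof. Note, however, that the paper itself gives no proof of this Proposition: it is quoted from \cite[Proposition 4.1]{Rec16a} and \cite[Theorem 1]{Ser98}, so what you have produced is a self-contained verification rather than a variant of an internal argument, and it is in the same spirit as those references (interpolation along segments toward projections). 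One small simplification is available: since $\A$ is closed and convex, $\A+D_r=\{x\in\H\ :\ \d(x,\A)\le r\}$ (one inclusion is trivial, the other uses $x=\Proj_\A(x)+(x-\Proj_\A(x))$), which gives closedness of the two factors directly from the continuity of $x\mapsto\d(x,\A)$, avoiding the weak-compactness/Mazur detour; the same identity also shortens the boundary cases $\G(0)=\A$, $\G(1)=\B$. Finally, for $t=0$ or $t=1$ the radius $t\rho$ or $(1-t)\rho$ vanishes and $D_0=\{0\}$, an edge case worth stating explicitly since the displayed definition of $D_\rho$ assumes $\rho>0$; your argument covers it without change.
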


As stated in the previous proposition the curve $\G_{(\A,\B)}$ represents a sort of minimal path from $\A$ to $\B$ in 
$\Conv_{\H}$ (but this is not the only one, as shown in \cite{Ser98} and in \cite{Rec11c}). We will see that a connection with the play operator is obtained when $\G_{(\A,\B)}$ joins two sets of the form $\A = u_0 - \Z$ and $\B = u_1 - \Z$. 

Before stating the next Lemma, let us observe that $\d_\hausd(u_0 - \Z, u_1 - \Z) = \norm{u_0-u_1}{}$ for every pair
$u_0, u_1 \in \H$.

\begin{Lem}\label{particular sweep}
Assume that $u_0, u_1 \in \H$ with $u_0 \neq u_1$,  and let $\G_{u_0, u_1} : \clint{0,1} \function \Conv_\H$ be defined by $\G_{u_0, u_1} := \G_{(u_0 - \Z, u_1 - \Z)}$, i.e. 
\begin{equation}\label{convex-valued geodesic}
  \G_{u_0,u_1}(t) := (u_0 - \Z + D_{t\norm{u_0-u_1}{}}) \cap (u_1 - \Z + D_{(1-t)\norm{u_0-u_1}{}}), \qquad t \in \clint{0,1}.
\end{equation}
If $y_0 \in u_0 - \Z$ then there exists a unique $y \in \Lip(\clint{0,1};\H)$ such that
\begin{align}
  & y(t) \in \G_{u_0,u_1}(t) \qquad \forall t \in \clint{0,1}, \label{Pjump sweep-constr} \\
  & y'(t) \in -N_{\G_{u_0,u_1}(t)}(y(t))   \qquad \text{for $\leb^1$-a.e. $t \in \clint{0,1}$}, \label{Pjump sweep-eq} \\
  & y(0) = y_0. \label{Pjump sweep-i.c.}
\end{align}
Moreover if $t_0 \in \clint{0,1}$ is the unique number such that 
\begin{equation}
  \d(y_0, u_1 - \Z) = (1 - t_0)\norm{u_1 - u_0}{},
\end{equation}
then one has
\begin{equation}\label{sol. of the particular sweeping pb}
  y(t) = 
  \begin{cases}
    y_0 & \text{if $t \in \clsxint{0,t_0}$}, \\
    y_0 + \dfrac{t-t_0}{1-t_0}(\Proj_{u_1-\Z}(y_0) - y_0) & \text{if $t_0 \neq 1$, $t \in \clsxint{t_0,1}$}, \\
    \Proj_{u_1-\Z}(y_0) & \text{if $t = 1$}.
  \end{cases}
\end{equation}
\end{Lem}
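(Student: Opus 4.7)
My first step would be to invoke Theorem~\ref{T:existence general Lipsweep} with $\C := \G_{u_0,u_1}$, which belongs to $\Lip(\clint{0,1};\Conv_\H)$ by the preceding proposition. Since $y_0 \in u_0 - \Z = \G_{u_0,u_1}(0)$, the prescribed initial value $\Proj_{\G_{u_0,u_1}(0)}(y_0)$ reduces to $y_0$, so existence and uniqueness of a $y \in \Lip(\clint{0,1};\H)$ solving \eqref{Pjump sweep-constr}--\eqref{Pjump sweep-i.c.} is immediate.

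The substantive task is to check that the function $\yhat$ defined as the right-hand side of \eqref{sol. of the particular sweeping pb} agrees with this $y$; by uniqueness it suffices to verify the three conditions for $\yhat$. Set $\rho := \norm{u_0-u_1}{}$ and $p := \Proj_{u_1-\Z}(y_0)$, so that $\norm{y_0-p}{} = \d(y_0, u_1-\Z) = (1-t_0)\rho$, and for $t \in \clsxint{t_0,1}$ introduce $s := (t-t_0)/(1-t_0)$, giving $\yhat(t) = (1-s)y_0 + s p$ and $(1-s)(1-t_0) = 1-t$. The initial condition $\yhat(0)=y_0$ is immediate from the formula.

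For the constraint $\yhat(t) \in \G_{u_0,u_1}(t)$: on $\clsxint{0,t_0}$ one uses $y_0 \in u_0-\Z \subseteq u_0-\Z+D_{t\rho}$ together with $\d(y_0,u_1-\Z) = (1-t_0)\rho \le (1-t)\rho$; on $\clsxint{t_0,1}$ the identities $\norm{\yhat(t)-y_0}{} = (t-t_0)\rho \le t\rho$ and $\norm{\yhat(t)-p}{} = (1-s)(1-t_0)\rho = (1-t)\rho$ deliver the two inclusions defining $\G_{u_0,u_1}(t)$. These verifications are routine.

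The main obstacle is the differential inclusion. On $\opint{0,t_0}$ it holds trivially because $\yhat'=0$. On $\opint{t_0,1}$ the derivative $\yhat'(t) = (p-y_0)/(1-t_0)$ is constant, so I must show $\duality{v - \yhat(t)}{y_0-p} \le 0$ for every $v \in \G_{u_0,u_1}(t)$. Writing such $v$ as $v = w + b$ with $w \in u_1-\Z$ and $\norm{b}{} \le (1-t)\rho$, and using $\yhat(t) - p = (1-s)(y_0-p)$, one computes
\begin{equation}\notag
  \duality{v - \yhat(t)}{y_0-p} = \duality{w-p}{y_0-p} - (1-s)\norm{y_0-p}{}^2 + \duality{b}{y_0-p}.
\end{equation}
The first term is nonpositive by the characterization of $p$ as the projection of $y_0$ onto $u_1-\Z$; the second equals $-(1-s)(1-t_0)^2\rho^2$; and Cauchy--Schwarz together with $\norm{b}{}\le(1-t)\rho = (1-s)(1-t_0)\rho$ bounds the third in absolute value by $(1-s)(1-t_0)^2\rho^2$. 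The latter two cancel, yielding a nonpositive total and completing the verification.
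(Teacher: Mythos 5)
Your argument is correct. For existence and uniqueness you do exactly what the paper does: since $\G_{u_0,u_1}\in\Lip(\clint{0,1};\Conv_\H)$ and $\Proj_{\G_{u_0,u_1}(0)}(y_0)=y_0$ (because $\G_{u_0,u_1}(0)=u_0-\Z$ contains $y_0$), Theorem \ref{T:existence general Lipsweep} applies. Where you genuinely diverge is in the second half: the paper disposes of the explicit formula \eqref{sol. of the particular sweeping pb} by citing Lemma 4.5 of \cite{Rec16a}, whereas you verify it directly, checking that the candidate piecewise affine function satisfies \eqref{Pjump sweep-constr}--\eqref{Pjump sweep-i.c.} and invoking uniqueness. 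Your verification is sound: the constraint on $\clsxint{0,t_0}$ uses $\d(y_0,u_1-\Z)=(1-t_0)\norm{u_0-u_1}{}\le(1-t)\norm{u_0-u_1}{}$, on $\clint{t_0,1}$ it uses the two norm identities $\norm{\yhat(t)-y_0}{}=(t-t_0)\rho$ and $\norm{\yhat(t)-\Proj_{u_1-\Z}(y_0)}{}=(1-t)\rho$, and the normal-cone inequality on $\opint{t_0,1}$ follows from the projection characterization plus the exact cancellation between $-(1-s)\norm{y_0-p}{}^2$ and the Cauchy--Schwarz bound $(1-s)(1-t_0)^2\rho^2$ on $\duality{b}{y_0-p}$; it is legitimate that you test only against the larger set $u_1-\Z+D_{(1-t)\rho}\supseteq\G_{u_0,u_1}(t)$, and the degenerate case $t_0=1$ (where $\yhat\equiv y_0=\Proj_{u_1-\Z}(y_0)$) is covered by your case analysis. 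The payoff of your route is that the lemma becomes self-contained within this paper, at the cost of a page of elementary computation that the paper outsources to \cite{Rec16a}; the content proved is the same.
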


\begin{proof}
The Lipschitz continuity of $\G_{u_0,u_1}$ can be inferred from \cite[Proposition 4.4]{Rec16a}, therefore the existence and uniqueness of a function $y$ solving \eqref{Pjump sweep-constr}--\eqref{Pjump sweep-i.c.} is a consequence of the Theorem \ref{T:existence general Lipsweep} about sweeping processes. The fact that $y$ is explicitly given by 
\eqref{sol. of the particular sweeping pb} is proved in \cite[Lemma 4.5]{Rec16a}.
\end{proof}

\begin{Rem}
Observe, in the above Lemma, that if $y_0 \not\in u_1 - \Z$ then $t_0$ is the first time $t$ when the boundary of
$u_1 - \Z + D_{(1-t)\norm{u_1-u_0}{}}$ touches $y_0$. From that moment on, $y(t)$ starts moving along the segment with endpoints $y_0$ and $y_1$ and it finishes its evolution on the point $y_1 = y(1)$: in other words at the time $t_0$ the point $y_0$ ``is swept'' by the moving boundary of $u_1 - \Z + D_{(1-t)\norm{u_1-u_0}{}}$ along a segment ending at $y_1$.
\end{Rem}

The point in Lemma \ref{particular sweep} is that the function $\G_{u_0,u_1}$ is such that the solution $y$ of 
\eqref{Pjump sweep-constr}--\eqref{Pjump sweep-i.c.} is such that $y(1) = \Proj_{u_1-\Z}(y_0)$ for every
initial condition $y_0 \in \G_{u_0,u_1}(0) = u_0 - \Z$.

In the next proposition we introduce a general technique of reparametrization by the arc length for functions of bounded 
variation with values in a metric space. This technique is a slight generalization of \cite[Proposition 5.1]{Rec16a}.

\begin{Prop}\label{D:reparametrization}
Assume that \eqref{X complete metric space} is satisfied, set
\begin{equation}\label{pairs of X with finite distance}
  \Phi_X := \{(x,y) \in \X \times \X\ :\ 0 < d(x,y) < \infty\}, \notag 
\end{equation}
and let $\mathscr{G} = (g_{(x,y)})_{(x,y) \in \Phi}$ be a family of geodesics connecting $x$ to $y$ for every 
$(x,y) \in \Phi_X$.
For every $f \in \BV(\clint{a,b};\X)$ we define $\ell_f : \clint{0,T} \function \clint{0,T}$ by
\begin{equation}\label{normalized arc length}
  \ell_f(t) := 
  \begin{cases}
    \dfrac{T}{\pV(f,\clint{0,T})}\pV(f,\clint{0,t}) & \text{if $\pV(f,\clint{0,T}) \neq 0$}, 	\\
    															\\
    0							           & \text{if $\pV(f,\clint{0,T}) = 0$},
  \end{cases}
    \qquad t \in \clint{0,T}.
\end{equation}
Then then there is a unique $\ftilde \in \Lip(\clint{0,T};\X)$ such that $\Lipcost(\ftilde) \le \pV(f,\clint{0,T})/T$ and 
\begin{align}
  & f(t) = \ftilde(\ell_f(t)) \qquad \forall t \in \clint{0,T}, \label{reparametrization1}  
  \\ 
 & \ftilde(\sigma) = g_{(f(t-),f(t))}\left(\frac{\sigma - \ell_f(t-)}{\ell_f(t) - \ell_f(t-)}\right) 
          \qquad \forall \sigma \in \clint{\ell_f(t-),\ell_f(t)} \quad \text{if $\ell_f(t-) \neq \ell_f(t)$}, \label{reparametrization2} \\ 
  & \ftilde(\sigma) = g_{(f(t),f(t+))}\left(\frac{\sigma - \ell_f(t)}{\ell_f(t+) - \ell_f(t)}\right) 
          \qquad \forall \sigma \in \clint{\ell_f(t),\ell_f(t+)}\quad \text{if $\ell_f(t) \neq \ell_f(t+)$}.  \label{reparametrization3}
\end{align}
\end{Prop}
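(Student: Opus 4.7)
The plan is to construct $\ftilde$ piece by piece: on the image of $\ell_f$ by using $f$ through a right pseudo-inverse of $\ell_f$, and on each open interval that $\ell_f$ skips over at its jump points by affinely rescaling the prescribed geodesic. The case $\pV(f,\clint{0,T}) = 0$ is trivial, so I assume $V := \pV(f,\clint{0,T}) > 0$.

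The central estimate, coming from the additivity of pointwise variation, is
\begin{equation}\notag
  \d(f(s),f(t)) \le \pV(f,\clint{s,t}) = (V/T)\bigl(\ell_f(t) - \ell_f(s)\bigr),\qquad s \le t,
\end{equation}
and it shows that $\ell_f(s) = \ell_f(t)$ forces $f(s) = f(t)$, so the rule $\ftilde(\ell_f(t)) := f(t)$ unambiguously defines a $(V/T)$-Lipschitz map on $E := \ell_f(\clint{0,T})$. By continuity this extends to $\overline{E}$ with $\ftilde(\ell_f(t-)) = f(t-)$ and $\ftilde(\ell_f(t+)) = f(t+)$ for $t \in \discont(f)$, which is consistent with the standard one-sided jump identities for the variation function, namely $\ell_f(t) - \ell_f(t-) = (T/V)\d(f(t-),f(t))$ and $\ell_f(t+) - \ell_f(t) = (T/V)\d(f(t),f(t+))$.

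On each non-degenerate open gap $\opint{\ell_f(t-),\ell_f(t)}$ or $\opint{\ell_f(t),\ell_f(t+)}$ I define $\ftilde$ by the prescriptions \eqref{reparametrization2}--\eqref{reparametrization3}. The affine rescaling transforms a geodesic of speed $\d(f(t-),f(t))$ on $\clint{0,1}$ into a curve defined on an interval of length $(T/V)\d(f(t-),f(t))$, so the resulting Lipschitz constant on each gap is exactly $V/T$, and the endpoint values coincide with those already fixed on $\overline{E}$; an analogous argument handles the right-jump gaps. To upgrade these piecewise Lipschitz bounds to a global one for arbitrary $\sigma_1 < \sigma_2$, I insert as intermediate points the endpoints $\ell_f(t-), \ell_f(t), \ell_f(t+)$ of the gaps meeting $\opint{\sigma_1,\sigma_2}$ and apply the triangle inequality: each resulting subinterval lies either in $\overline{E}$ or in the closure of a single gap, so the sum of the bounds telescopes to $(V/T)(\sigma_2 - \sigma_1)$.

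The main obstacle is precisely this last step, since the gaps, although of total length bounded by $T$, may form a dense collection, so the telescoping has to be set up carefully (writing $\opint{\sigma_1,\sigma_2}$ as the disjoint union of at most countably many gap pieces and a piece of $\overline{E}$, then bounding each summand by $(V/T)$ times its length and adding). Uniqueness is, in contrast, immediate: \eqref{reparametrization1} pins down $\ftilde$ on $E$, Lipschitz continuity forces the values at the cluster points $\ell_f(t\pm)$, and \eqref{reparametrization2}--\eqref{reparametrization3} prescribe $\ftilde$ on every open gap.
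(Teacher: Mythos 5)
Your construction follows essentially the same route as the paper's proof, only in more detail: the paper gets the Lipschitz map on $\ell_f(\clint{0,T})$ by the standard reparametrization argument (citing Federer) and then fills the gaps $\clint{\ell_f(t-),\ell_f(t)}$ and $\clint{\ell_f(t),\ell_f(t+)}$ with the rescaled geodesics, with uniqueness exactly as you argue; your treatment of well-posedness on $E:=\ell_f(\clint{0,T})$, of the one-sided jump identities for the variation function, of the endpoint matching, and of uniqueness is correct.

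The one step to revise is the one you yourself flag as the main obstacle, because as literally written it does not go through, and in fact no delicate telescoping is needed at all. Inserting ``the endpoints of all gaps meeting $\opint{\sigma_1,\sigma_2}$'' and applying the triangle inequality presupposes a finite chain of intermediate points, which is unavailable when infinitely many gaps meet the interval; likewise, the distance $\d(\ftilde(\sigma_1),\ftilde(\sigma_2))$ does not split as a sum over the countably many pieces of your decomposition, so ``bounding each summand and adding'' is not a proof as stated. The point you are overlooking is that the bound on $\overline{E}$ is already global: for $\sigma_i=\ell_f(s_i)$ one has $\d(\ftilde(\sigma_1),\ftilde(\sigma_2))=\d(f(s_1),f(s_2))\le (V/T)(\sigma_2-\sigma_1)$ irrespective of how many gaps lie between $\sigma_1$ and $\sigma_2$, and this passes to $\overline{E}$ by continuity. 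Hence for arbitrary $\sigma_1<\sigma_2$ at most two intermediate points suffice: if $\sigma_1$ lies in a gap $\opint{a_1,b_1}$ and $\sigma_2>b_1$, insert $b_1\in\overline{E}$; if $\sigma_2$ lies in a gap $\opint{a_2,b_2}$ with $a_2\ge\sigma_1$, insert $a_2\in\overline{E}$; the (at most three) increments are then controlled by the gap estimate, the $\overline{E}$ estimate and the gap estimate, and sum to $(V/T)(\sigma_2-\sigma_1)$. (Implicit in both your argument and the statement is that each $g_{(x,y)}$ is parametrized with constant speed, i.e. $\d(g_{(x,y)}(s),g_{(x,y)}(t))=|s-t|\,\d(x,y)$, which is the intended meaning of geodesic here and holds for the family actually used in the paper.) With this simplification your proof is complete.
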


\begin{proof}
The existence and uniqueness of a Lipschitz continuous function $F : \ell_f(\clint{0,T}) \function \X$ such that
$f(t) = F(\ell_f(t))$ for every $t$ is obtained in a standard way (see, e.g., \cite[Section 2.5.16, p. 109]{Fed69}). The uniqueness of an extension of $\ftilde : \clint{0,T}\function \X$ of $F$ satisfying 
\eqref{reparametrization2}--\eqref{reparametrization3} is a straighforward consequence of the fact that $g_{(f(t-),f(t))}$ and $g_{(f(t),f(t+))}$ are two geodesics connecting respectively $f(t-) = F(\ell_f(t-))$ with $f(t)= F(\ell_f(t))$, and 
$f(t)= F(\ell_f(t))$ with $f(t+)= F(\ell_f(t+))$.
\end{proof}

The technique introduced in the previous Proposition differs from the classical one in \cite[Section 2.5.16, p. 109]{Fed69}, since the jumps of the given function are connected by geodesics curves which values in the original metric space $\X$, and not in a Banach space where $\X$ is embedded.

If in Proposition \ref{D:reparametrization} we take $\X = \H$ and $\mathscr{G} = (g_{(x,y)})_{(x,y) \in \Phi_\H}$ defined by
\[
  g_{(x,y)}(t) := (1-t)x+ty, \qquad t \in \clint{0,1},
\]
we obtain the following

\begin{Cor}\label{reparametrization in H}
If $u \in \Lip(\clint{0,T};\H)$ then there exists a unique $\utilde \in \Lip(\clint{0,T};\H)$ such that 
$\Lipcost(\utilde,\clint{0,T}) \le \V(u,\clint{0,T})/T$ and 
\begin{align}
  & u(t) = \utilde(\ell_u(t))   \qquad \forall t \in \clint{0,T}, \label{H-reparametrization1} \\ 
  & \utilde(\ell_u(t-)(1-\lambda) + \ell_u(t)\lambda) =  
    (1-\lambda)u(t-) + \lambda u(t) 
     \qquad  \forall \lambda \in \clint{0,1} \quad \text{if $\ell_u(t-) \neq \ell_u(t)$}, \label{H-reparametrization2} \\
  & \utilde(\ell_u(t)(1-\lambda) + \ell_u(t+)\lambda) =  
    (1-\lambda)u(t) + \lambda u(t+) 
     \qquad \forall \lambda \in \clint{0,1} \quad \text{if $\ell_u(t) \neq \ell_u(t+)$}. \label{H-reparametrization3}
\end{align}
\end{Cor}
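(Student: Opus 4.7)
The plan is to apply Proposition~\ref{D:reparametrization} with $\X = \H$ endowed with its norm-induced metric, and with the family of linear geodesics $g_{(x,y)}(t) := (1-t)x + ty$ for $(x,y) \in \Phi_\H$ and $t \in \clint{0,1}$. This reduces the corollary to a direct specialization of the general metric-valued reparametrization.

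First I would verify that these linear segments are genuine geodesics in $\H$. We have $g_{(x,y)}(0) = x$ and $g_{(x,y)}(1) = y$, and since $\norm{g_{(x,y)}(s) - g_{(x,y)}(t)}{} = |s-t|\,\norm{x-y}{}$, it follows that $\pV(g_{(x,y)},\clint{0,1}) = \norm{x-y}{} = d(x,y)$, which is precisely the geodesic condition required in Proposition~\ref{D:reparametrization}. With this hypothesis in place, the proposition applied to $f = u \in \Lip(\clint{0,T};\H) \subseteq \BV(\clint{0,T};\H)$ supplies a unique $\utilde \in \Lip(\clint{0,T};\H)$ with $\Lipcost(\utilde) \le \pV(u,\clint{0,T})/T$ satisfying the identity $u = \utilde \circ \ell_u$, which is exactly \eqref{H-reparametrization1}. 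Substituting the specific form $g_{(f(t-),f(t))}(\lambda) = (1-\lambda)f(t-) + \lambda f(t)$ into \eqref{reparametrization2}, and analogously for \eqref{reparametrization3}, yields precisely the affine interpolation formulas \eqref{H-reparametrization2} and \eqref{H-reparametrization3}.

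Finally, I would observe that because $u$ is Lipschitz, and hence continuous, the normalized arc length $\ell_u$ defined in \eqref{normalized arc length} is itself continuous on $\clint{0,T}$; in particular $\ell_u(t-) = \ell_u(t) = \ell_u(t+)$ for every $t \in \clint{0,T}$, so the hypotheses $\ell_u(t-) \neq \ell_u(t)$ and $\ell_u(t) \neq \ell_u(t+)$ in \eqref{H-reparametrization2} and \eqref{H-reparametrization3} are never triggered and those conditions are vacuous. The only substantive verification is the geodesic property of chords in a Hilbert space, so no genuine obstacle arises; the corollary is essentially an immediate transcription of Proposition~\ref{D:reparametrization} to the flat setting $\X = \H$ with straight-line geodesics.
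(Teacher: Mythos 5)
Your proposal is correct and matches the paper's own treatment: the corollary is stated there as an immediate specialization of Proposition~\ref{D:reparametrization} to $\X = \H$ with the straight-line geodesics $g_{(x,y)}(t) = (1-t)x + ty$, whose geodesic property you verify exactly as needed. Your closing observation that the hypothesis $u \in \Lip(\clint{0,T};\H)$ makes $\ell_u$ continuous and the jump clauses \eqref{H-reparametrization2}--\eqref{H-reparametrization3} vacuous is accurate and correctly flags that the intended hypothesis is $u \in \BV(\clint{0,T};\H)$, which is how the result is actually used in the proof of Theorem~\ref{main thm}.
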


If instead we choose $X = \Conv_\H$ and $\mathscr{G} = (\G_{(\A,\B)})_{(\A,\B) \in \Phi_{\Conv_\H}}$ provided by
\[
  \G_{(\A,\B)} = (\A + D_{t\rho}) \cap (\B + D_{(1-t)\rho}), \quad t \in \clint{0,1}, \qquad \rho = \d_\hausd(\A,\B),
\]
then the following Corollary is inferred.

\begin{Cor}\label{reparametrization in Conv}
If $\C \in \Lip(\clint{0,T};\Conv_\H)$ then there exists a unique $\Ctilde \in \Lip(\clint{0,T};\Conv_\H)$ such that 
$\Lipcost(\Ctilde,\clint{0,T}) \le \V(\C,\clint{0,T})/T$ and 
\begin{align}
  & \C(t) = \Ctilde(\ell_\C(t))   \qquad \forall t \in \clint{0,T}, \label{C-reparametrization1} \\ 
  & \Ctilde(\ell_\C(t-)(1-\lambda) + \ell_\C(t)\lambda)  =  
     (\C(t-) + D_{\lambda \rho_{t-}}) \cap (\C(t) + D_{(1-\lambda)\rho_{t-}}) \notag \\
  & \phantom{\ \ \ \ \ \ \ \ \ \ \ \ \ \ \ \ }
     \qquad \forall \lambda \in \clint{0,1}, \text{if $\ell_u(t-) \neq \ell_u(t)$ with $\rho_{t-} := \d_\hausd(\C(t-),\C(t))$}. \\
   & \Ctilde(\ell_\C(t)(1-\lambda) + \ell_\C(t+)\lambda)  =  
     (\C(t) + D_{\lambda \rho_{t+}}) \cap (\C(t+) + D_{(1-\lambda)\rho_{t+}}) \notag \\
  & \phantom{\ \ \ \ \ \ \ \ \ \ \ \ \ \ \ \ }
     \qquad \forall \lambda \in \clint{0,1}, \text{if $\ell_u(t) \neq \ell_u(t+)$ with $\rho_{t+} := \d_\hausd(\C(t),\C(t+))$}.   
\end{align}
\end{Cor}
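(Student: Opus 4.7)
The plan is to derive this corollary as a direct specialization of Proposition~\ref{D:reparametrization}, with the underlying metric space taken to be $(\Conv_\H, \d_\hausd)$ and the distinguished family of geodesics taken to be $\mathscr{G} = (\G_{(\A,\B)})_{(\A,\B) \in \Phi_{\Conv_\H}}$ defined by formula \eqref{catching-up geodesic-2}. Essentially all of the content is already packaged in that proposition; what is left is to verify the hypotheses in the present setting and to translate the abstract jump conclusions into the concrete form involving intersections of tubes.

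First I would check the hypotheses of Proposition~\ref{D:reparametrization}. The completeness of $(\Conv_\H,\d_\hausd)$ was recalled in Section~\ref{S:Preliminaries}, and each $\G_{(\A,\B)}$ is a geodesic joining $\A$ to $\B$ by the Proposition preceding Lemma~\ref{particular sweep}. Moreover $\C \in \Lip(\clint{0,T};\Conv_\H)$ yields $\pV(\C,\clint{0,T}) \le \Lipcost(\C)\cdot T < \infty$, so in particular $\C \in \BV(\clint{0,T};\Conv_\H)$. Proposition~\ref{D:reparametrization} then furnishes a unique $\Ctilde \in \Lip(\clint{0,T};\Conv_\H)$ with $\Lipcost(\Ctilde) \le \pV(\C,\clint{0,T})/T \le \V(\C,\clint{0,T})/T$ satisfying \eqref{reparametrization1}, which is exactly \eqref{C-reparametrization1}, together with the abstract jump formulas \eqref{reparametrization2}--\eqref{reparametrization3}.

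The final step is to rewrite those two jump formulas in the announced form. At a left jump point, setting $\sigma = (1-\lambda)\ell_\C(t-) + \lambda\ell_\C(t)$ for $\lambda \in \clint{0,1}$ gives $(\sigma - \ell_\C(t-))/(\ell_\C(t)-\ell_\C(t-)) = \lambda$, so \eqref{reparametrization2} reduces to $\Ctilde(\sigma) = \G_{(\C(t-),\C(t))}(\lambda)$. Inserting the explicit formula \eqref{catching-up geodesic-2} with $\rho = \rho_{t-} := \d_\hausd(\C(t-),\C(t))$ then yields exactly the right-hand side of \eqref{C-reparametrization2}. The identical affine change of variable applied to \eqref{reparametrization3}, with $\rho = \rho_{t+} := \d_\hausd(\C(t),\C(t+))$, produces \eqref{C-reparametrization3}.

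No step of this argument is genuinely delicate: the heavy lifting (construction, uniqueness, Lipschitz estimate, geodesic filling of jumps in a general complete metric space) has already been carried out in Proposition~\ref{D:reparametrization}. The only thing that might be called a point of attention is the elementary affine bookkeeping on the jump intervals together with the substitution into \eqref{catching-up geodesic-2}; if any subtlety lurks, it is merely to observe that $\d_\hausd(\C(s),\C(t))$ is finite throughout, which is guaranteed by the Lipschitz continuity of $\C$ and validates the definition of the geodesics $\G_{(\C(t-),\C(t))}$ and $\G_{(\C(t),\C(t+))}$ at every jump.
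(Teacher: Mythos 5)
Your proposal is correct and follows essentially the same route as the paper, which obtains this corollary precisely by specializing Proposition~\ref{D:reparametrization} to the complete metric space $(\Conv_\H,\d_\hausd)$ with the geodesic family \eqref{catching-up geodesic-2} and then rewriting the jump formulas via the obvious affine change of variable. Your added verifications (completeness, the geodesic property, finiteness of $\d_\hausd$ along the curve, and the Lipschitz--to--$\BV$ remark) are exactly the routine checks the paper leaves implicit.
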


If $u \in \BV(\clint{0,T};\H)$ and we take $\C = \C_u := u - \Z \in \BV(\clint{0,T};\Conv_\H)$, then there is a relationship between the two situations in Corollary \ref{reparametrization in H} and Corollary \ref{reparametrization in Conv}:

\begin{Lem}\label{Ctilde_u = utilde - Z}
Assume that $u \in \BV(\clint{0,T};\H)$ and let $\C_u : \clint{0,T} \function \Conv_\H$ be defined by 
\begin{equation}
  \C_u(t) := u(t) - \Z, \qquad t \in \clint{0,T}.
\end{equation}
Then $\C_u \in \BV(\clint{0,T};\Conv_\H)$, $\ell_u = \ell_{\C_u}$ and 
\begin{equation}\label{Ctilde_u = utilde - Z eq}
  \Ctilde_u(\sigma) = \utilde(\sigma) - \Z  \qquad \text{on $\ell_u(\clint{0,T})$},
\end{equation} 
where $\ell_u$ and $\ell_{\C_u}$ are the normalized arc lengths of $u$ and $\C_u$, and $\utilde$ and $\Ctilde_u$ are the reparametrizations defined respectively in Corollaries \ref{reparametrization in H} and \ref{reparametrization in Conv}.
\end{Lem}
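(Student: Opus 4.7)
The plan is to establish the three claims in sequence: (i) $\C_u \in \BV(\clint{0,T};\Conv_\H)$ with the same partial variations as $u$, (ii) $\ell_u = \ell_{\C_u}$, and (iii) the pointwise identity on the image of $\ell_u$. The only non-trivial input is the observation made just before Lemma \ref{particular sweep} that $\d_\hausd(u(s) - \Z, u(t) - \Z) = \norm{u(s) - u(t)}{}$ for all $s, t \in \clint{0,T}$; informally, $s \mapsto u(s) - \Z$ is an isometric embedding of $(\H,\norm{\cdot}{})$ into $(\Conv_\H, \d_\hausd)$.

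For (i) and (ii) I would apply this identity partition-wise: for any finite partition $t_0 < \cdots < t_m$ of a subinterval $J \subseteq \clint{0,T}$, the sum $\sum_j \d_\hausd(\C_u(t_{j-1}), \C_u(t_j))$ equals $\sum_j \norm{u(t_j) - u(t_{j-1})}{}$, and taking the supremum over all such partitions yields $\pV(\C_u, J) = \pV(u, J)$ for every $J$; this establishes $\C_u \in \BV(\clint{0,T};\Conv_\H)$ and, by direct substitution into the definition \eqref{normalized arc length}, the identity $\ell_u = \ell_{\C_u}$.

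For (iii), given $\sigma \in \ell_u(\clint{0,T})$ I pick $t \in \clint{0,T}$ with $\sigma = \ell_u(t)$; then \eqref{reparametrization1} applied to $f = u$ gives $\utilde(\sigma) = u(t)$, while applied to $f = \C_u$ together with $\ell_{\C_u} = \ell_u$ it gives $\Ctilde_u(\sigma) = \C_u(t) = u(t) - \Z$, so the two sides agree. If several $t$ map to the same $\sigma$, then $\ell_u$ is constant on an interval containing them, hence so are $u$ and $\C_u$, and the choice of $t$ is immaterial. The one conceptual point worth emphasizing — and the reason the statement is carefully restricted to the image $\ell_u(\clint{0,T})$ rather than to all of $\clint{0,T}$ — is that the identity $\Ctilde_u(\sigma) = \utilde(\sigma) - \Z$ generally \emph{fails} on the filled-in jump intervals: on $\clsxint{\ell_u(t-),\ell_u(t)}$ the value of $\utilde$ is the straight segment $(1-\lambda)u(t-) + \lambda u(t)$, while $\Ctilde_u$ is the set-valued geodesic \eqref{G-}, and the inclusion $((1-\lambda)u(t-) + \lambda u(t)) - \Z \subseteq (\C_u(t-) + D_{\lambda\rho}) \cap (\C_u(t) + D_{(1-\lambda)\rho})$ with $\rho = \norm{u(t)-u(t-)}{}$ is typically strict (take $\Z$ a round ball, for instance). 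Since the image-restriction sidesteps this, no serious obstacle appears in the proof; recognizing that this restriction is essential rather than decorative is the only real thing to be careful about.
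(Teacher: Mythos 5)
Your proposal is correct and follows essentially the same route as the paper: the isometry $\d_\hausd(u(s)-\Z,u(t)-\Z)=\norm{u(s)-u(t)}{}$ gives $\pV(\C_u,J)=\pV(u,J)$ for every subinterval, hence $\C_u\in\BV(\clint{0,T};\Conv_\H)$ and $\ell_{\C_u}=\ell_u$, and then evaluating the two reparametrizations at $\sigma=\ell_u(t)$ yields $\Ctilde_u(\sigma)=\C_u(t)=u(t)-\Z=\utilde(\sigma)-\Z$. Your additional remark that the identity genuinely fails inside the filled-in jump intervals (segment versus set-valued geodesic, with strict inclusion for, e.g., $\Z$ a ball) is accurate and explains the restriction to $\ell_u(\clint{0,T})$, though it is not needed for the proof itself.
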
  

\begin{proof}
First of all let us observe that for every $t, s \in \clint{0,T}$ we have that 
$\d_\hausd(\C_u(t),\C_u(s)) = \d_\hausd(u(t)-\Z,u(s)-\Z) = \norm{u(t) - u(s)}{}$, therefore $\pV(\C_u,J) = \pV(u,J)$ for every interval $J \subseteq \clint{0,T}$, and this implies that $\C_u \in \BV(\clint{0,T};\Conv_\H)$ and $\ell_{\C_u} = \ell_u$.
Hence, for every $\sigma \in \ell_u(\clint{0,T})$ there exists $t \in \clint{0,T}$ such that $\sigma = \ell_u(t) = \ell_{\C_u}(t)$ and we have that 
\begin{align}
  \utilde(\sigma) - \Z 
    = \utilde(\ell_u(t)) - \Z 
     = u(t) - \Z  = \C_u(t)
     = \Ctilde_u(\ell_\C(t)) = \Ctilde_u(\sigma). \notag
\end{align}
\end{proof}

Now we are finally in position to provide the

\begin{proof}[Proof of Theorem \ref{main thm}]
Let $\C_u : \clint{0,T} \function \Conv_\H$ be defined by
\begin{equation}
  \C_u(t) := u(t) - \Z, \qquad t \in \clint{0,T}.
\end{equation}
Thank to Lemma \ref{Ctilde_u = utilde - Z} we have that $\C_u \in \BV(\clint{0,T};\Conv_\H)$, and if 
$\ell_u : \clint{0,T} \function \clint{0,T}$ is the normalized arc length of $u$ defined by \eqref{normalized arc length}, 
then by Proposition \ref{D:reparametrization} there exists a unique $\utilde \in \Lip(\ell_u(\clint{0,T});\H)$ such that
\begin{equation}
  u(t) = \utilde(\ell_u(t)) \qquad \forall t \in \clint{0,T}
\end{equation}
(actually we will not need to extend $\utilde$ outside of $\ell_u(\clint{0,T})$).
Now let $\ell_{\C_u}  : \clint{0,T} \function \clint{0,T}$ be the normalized arc length of $\C_u$, let 
$\Phi_{\Conv_\H} = \{(\A,\B) \in \Conv_\H \times \Conv_\H \ :\ 0 < \d_\hausd(\A,\B) < \infty\}$, and for every
$(\A,\B) \in \Phi_{\Conv_\H}$ let $\mathscr{G}_{(\A,\B)} : \clint{0,T} \function \Conv_\H$ be defined by 
\eqref{catching-up geodesic-2}. Then by Corollary \ref{reparametrization in Conv} there exists a unique reparametrization
$\Ctilde_u \in \Lip(\clint{0,T};\Conv_\H)$ such that $\Lipcost(\Ctilde_u) \le \V(\C_u,\clint{0,T})/T$ and
\begin{align}
  & u(t) - \Z = \Ctilde_u(\ell_{\C_u}(t)) = \utilde(\ell_u(t)) - \Z \qquad \forall t \in \clint{0,T}, \label{u - Z = Ctilde_u(lu)}\\
  &  \Ctilde_u(\sigma) = \G_{u(t-),u(t)}\left(\dfrac{\sigma - \ell_u(t-)}{\ell_u(t) - \ell_u(t-)}\right) 
         \qquad \forall \sigma \in \clint{\ell_u(t-),\ell_u(t)} \quad \text{if $\ell_u(t-) \neq \ell_u(t)$}, \label{Ctilde - } \\ 
  & \Ctilde_u(\sigma) = \G_{u(t),u(t+)}\left(\dfrac{\sigma - \ell_u(t)}{\ell_u(t+) - \ell_u(t)}\right) 
          \qquad \forall \sigma \in \clint{\ell_u(t),\ell_u(t+)}\quad \text{if $\ell_u(t) \neq \ell_u(t+)$}, \label{Ctilde + }
\end{align}
where $\G_{u(t-),u(t)}$ and $\G_{u(t),u(t+)}$ are given by \eqref{convex-valued geodesic} and we have used 
Lemma \ref{Ctilde_u = utilde - Z} in \eqref{u - Z = Ctilde_u(lu)}.
Since $\Ctilde_u \in \Lip(\clint{0,T};\Conv_\H)$ we can define
\begin{equation}\label{M(Ctilde_u)}
  \yhat := \M(\Ctilde_u, u(0)-z_0),
\end{equation}
where $\M$ is the solution operator of the sweeping process defined in Theorem \ref{T:existence general Lipsweep}, and  we define the function $y : \clint{0,T} \function \H$ by setting 
\begin{equation}
  y(t) := \M(\Ctilde_u, u(0)-z_0)(\ell_u(t)) = \yhat(\ell_u(t)), \qquad t \in \clint{0,T}.
\end{equation}   
We claim that $y$ is of bounded variation and solves problem \eqref{BV constraint for P}--\eqref{BV i.c. for P}. First of all let us observe that thanks to \eqref{y in C - Lip-sweep} and \eqref{Ctilde_u = utilde - Z eq} we have that  for every 
$t \in \clint{0,T}$ 
\begin{equation}
  y(t) = \M(\Ctilde_u, u(0)-z_0)(\ell_u(t)) \in \Ctilde_u(\ell_u(t)) = \utilde(\ell_u(t)) - \Z = u(t) - \Z,
\end{equation} 
therefore $u(t) - y(t) \in \Z$ for every $t \in \clint{0,T}$ and \eqref{BV constraint for P} is satisfied.
Since 
$\yhat = \M(\Ctilde_u, u(0)-z_0)$ is Lipschitz continuous and $\ell_u$ is increasing, it is clear that $y \in \BV(\clint{0,T};\H)$ and that $y$ is left continuous (respectively: right continuous) if and only if $\ell_u$ is left continuous (respectively: right continuous), so that $\discont(y) = \discont(\ell_u) = \discont(u)$. Now let $w : \clint{0,T} \function \H$ be defined by
\begin{equation}\label{def of w}
  w(t) :=
  \begin{cases}
    \yhat'(\ell_u(t)) & \text{if $t \in \cont(u)$}, \\
    \ \\
    \dfrac{\yhat(\ell_u(t+)) - \yhat(\ell_u(t-))}{\ell_u(t+)-\ell_u(t-)} & \text{if $t \in \discont(u)$}.
  \end{cases}
\end{equation}
Thanks to the chain rule in Proposition \ref{P:BV chain rule}-(ii) we have that 
\begin{equation}\label{Dy = wDl}
  \D y = \D\ \!(\yhat \circ \ell_u) = w \D \ell_u.
\end{equation} 
Moreover from \eqref{M(Ctilde_u)} and Theorem \ref{T:existence general Lipsweep} it follows that 
$-\yhat'(\sigma) \in N_{\Ctilde_u(\sigma)}(\yhat(\sigma))$ for $\leb^1$-a.e. $\sigma \in \clint{0,T}$, i.e.
\begin{equation}
  \duality{\yhat(\sigma) - v}{\yhat'(\sigma)} \le 0 \quad \forall v \in \Ctilde_u(\sigma),
   \qquad \text{for $\leb^1$-a.e. $\sigma \in \clint{0,T}$},
\end{equation}
and this means that if
\begin{equation}
  E_u := \{\sigma \in \clint{0,T}\ :\ 
                 \duality{\yhat(\sigma) - v_\sigma}{\yhat'(\sigma)} > 0\ \text{for some $v_\sigma \in \Ctilde_u(\sigma)$}\},
\end{equation}
then $E_u$ is Lebesgue measurable and 
\begin{equation}
  \leb^1(E_u) = 0.
\end{equation}
Therefore if 
\[
  F_u := \{\sigma \in \ell_u(\clint{0,T})\ :\ 
                 \duality{\yhat(\sigma) - \utilde(\sigma) + \zeta_\sigma}{\yhat'(\sigma)} > 0\ \text{for some $\zeta_\sigma \in \Z$}\} 
\]
then thanks to Lemma \ref{Ctilde_u = utilde - Z} we have that
\begin{align}
  F_u \subseteq \{\sigma \in \ell_u(\clint{0,T})\ :\ 
                 \duality{\yhat(\sigma) - v_\sigma}{\yhat'(\sigma)} > 0\ \text{for some $v_\sigma \in \Ctilde_u(\sigma)$}\} \subseteq E_u
\end{align}
so that $F_u$ is also Lebesgue measurable and
\begin{equation}\label{leb(F)=0}
  \leb^1(F_u) = 0.
\end{equation}
Fix a bounded measurable $z : \clint{0,T} \function \H$ such that $z(t) \in \Z$ for every $t \in \clint{0,T}$. 
From \eqref{def of w} and \eqref{M(Ctilde_u)} we infer that 
\begin{align}
  & \{t \in \cont(u)\ :\ \duality{y(t) - u(t) + z(t)}{w(t)} > 0\} \notag \\
  & = \{t \in \cont(u)\ :\ \duality{\yhat(\ell_u(t)) - \utilde(\ell_u(t)) + z(t)}{\yhat'(\ell_u(t))} > 0\} \notag \\
  & \subseteq \{t \in \cont(u)\ :\ \duality{\yhat(\ell_u(t)) - \utilde(\ell_u(t)) + \zeta_t}{\yhat'(\ell_u(t))} > 0 \ 
                                   \text{for some $\zeta_t \in \Z$}\} \notag \\
  & \subseteq \{t \in \cont(u)\ :\ \ell_u(t) \in F_u\} \notag \\
  & = \ell_u^{-1}(F_u) \cap \cont(u) \notag \\
  & \subseteq \ell_u^{-1}(F_u) \cap \ell_u^{-1}(\ell_u(\cont(u))) \notag \\
  & = \ell_u^{-1}(F_u \cap \ell_u(\cont(u))) \notag \\
  & \subseteq \ell_u^{-1}(F_u), \notag
\end{align}
thus from Proposition \ref{P:BV chain rule}-(i) and \eqref{leb(F)=0} it follows that
\begin{align}
   & \D\ell_u(\{t \in \cont(u)\ :\ \duality{y(t) - u(t) + z(t)}{w(t)} > 0\})  \notag \\
  & \le \D\ell_u(\ell_u^{-1}(F_u )) = \leb^1(F_u) = 0
\end{align}
therefore, from \eqref{Dy = wDl} and \eqref{f de gnu =fg de nu}, we get
\begin{align}
  & \int_{\cont(u)} \duality{y(t) - u(t) - z(t)}{\de \D y(t)} \notag \\
  & = \int_{\cont(u)} \duality{y(t) - u(t) - z(t)}{\de w(\D\ell_u)(t)} \notag \\
  & = \int_{\cont(u)} \duality{y(t) - u(t) - z(t)}{w(t)} \de\D\ell_u(t) \le 0, \notag
\end{align}
and \eqref{BV int ineq for P} is proved.
Now take $t \in \discont(u)$. Using \eqref{semigroup}, \eqref{Ctilde - }, and \eqref{sol. of the particular sweeping pb} we get
\begin{align}
  y(t) & = \yhat(\ell_u(t)) \notag \\
        & = \M(\Ctilde_u, u(0) - z_0)(\ell_u(t)) \notag \\
        & = \M\left(\Ctilde_u(\cdot + \ell_u(t-)), \M(\Ctilde_u,u(0)-z_0)(\ell_u(t-))\right)(\ell_u(t)-\ell_u(t-)) \notag \\
        & = \M\left(\G_{u(t-),u(t)}, \yhat(\ell_u(t-))\right)(1) \notag \\
        & = \M\left(\G_{u(t-),u(t)}, y(t-)\right)(1) \notag \\
        & = \Proj_{u(t) - \Z}(y(t-)), \notag
\end{align}
thus we have checked that $y(t) = \Proj_{u(t) - \Z}(y(t-))$ which is equivalent to \eqref{BV jump cond for P-}.
Similarly from \eqref{semigroup}, \eqref{Ctilde + }, and \eqref{sol. of the particular sweeping pb} we infer that
\begin{align}
  y(t+) & = \yhat(\ell_u(t+)) \notag \\
        & = \M(\Ctilde_u, u(0) - z_0)(\ell_u(t+)) \notag \\
        & = \M\left(\Ctilde_u(\cdot + \ell_u(t)), \M(\Ctilde_u,u(0)-z_0)(\ell_u(t))\right)(\ell_u(t+)-\ell_u(t)) \notag \\
        & = \M\left(\G_{u(t),u(t+)}, \yhat(\ell_u(t))\right)(1) \notag \\
        & = \M\left(\G_{u(t),u(t+)}, y(t)\right)(1) \notag \\
        & = \Proj_{u(t+) - \Z}(y(t)), \notag 
\end{align}
thus we have checked that $y(t+) = \Proj_{u(t+) - \Z}(y(t))$ which is equivalent to \eqref{BV jump cond for P+}.
Finally $y(0) = \M(\Ctilde_u,u(0)-z_0)(0) = \Proj_{u(0) - \Z}(u(0)-z_0) = u(0) - z_0$, hence the initial condition 
\eqref{BV i.c. for P} is satisfied. Now we have to prove that $y$ is the unique solution of 
\eqref{BV constraint for P}--\eqref{BV i.c. for P}. Assume by contradiction that there are two solutions $y_1$ and $y_2$.
Then If $B \in \mathscr{M}(\clint{0,T})$ then, by \cite[Proposition 2]{Mor76} and by taking $z = u - (y_1+y_2)/2$ in 
\eqref{BV int ineq for P} we get
\begin{align}
  \int_{B \cap \cont(u)} \de \D\!\ (\norm{y_1(\cdot) - y_2(\cdot)}{}^2) 
    & \le 2 \int_{B \cap \cont(u)} \duality{y_1 - y_2}{\de \D\!\ (y_1 - y_2)}  \notag \\
    & \le 2 \int_{\cont(u)} \duality{y_1 - y_2}{\de \D\!\ (y_1 - y_2)} \le 0, \notag
\end{align}
while if $t \in \discont(u)$, from \eqref{BV jump cond for P+}--\eqref{BV jump cond for P+} we infer that
\begin{align}
  & \D\!\ (\norm{y_1(\cdot) - y_2(\cdot)}{}^2)(\{t\}) \notag \\
    & =   \norm{y_1(t+) - y_2(t+)}{}^2 - \norm{y_1(t-) - y_2(t-)}{}^2 \notag \\
    & =   \norm{\Proj_{\Z}(u(t+) - y_1(t)) - \Proj_{\Z}(u(t+) - y_2(t))}{}^{{2}} - \norm{y_1(t-) - y_2(t-)}{}^2 \notag \\ 
    & \le \norm{y_1(t) - y_2(t)}{}^{{2}} - \norm{y_1(t-) - y_2(t-)}{}^2 \notag \\
    & = \norm{\Proj_{\Z}(u(t) - y_1(t-)) - \Proj_{\Z}(u(t) - y_2(t-))}{}^{{2}} - \norm{y_1(t-) - y_2(t-)}{}^2 \notag \\
    & \le \norm{y_1(t-) - y_2(t-)}{}^{{2}} - \norm{y_1(t-) - y_2(t-)}{}^2 = 0. \notag
\end{align}
Therefore for every $B \in \mathscr{M}(\clint{0,T})$ we find
\begin{align}
  & \D\!\ (\norm{y_1(\cdot) - y_2(\cdot)}{}^2)(B) \notag \\
    & = \D\!\ (\norm{y_1(\cdot) - y_2(\cdot)}{}^2)(B \cap \cont(u)) + 
            \D\!\ (\norm{y_1(\cdot) - y_2(\cdot)}{}^2)(B \cap \discont(u)) \notag \\
    & = \int_{B \cap \cont(u)} \de \D\!\ (\norm{y_1(\cdot) - y_2(\cdot)}{}^2) + 
          \sum_{t \in B \cap \discont(u)} \D\!\ (\norm{y_1(\cdot) - y_2(\cdot)}{}^2)(\{t\}) \le 0 \notag
\end{align}
which implies that $t \longmapsto \norm{y_1(t) - y_2(t)}{}^2$ is nonincreasing and leads to the uniqueness of the solution. 
\end{proof}

Now we provide the 

\begin{proof}[Proof of Theorem \ref{main-left continuous case}]
We know from Theorem \ref{main thm} that $y$ is left continuous, thus we only have to prove formula 
\eqref{BV-l int ineq for P}. If $t \in \discont(u)$ then \eqref{BV jump cond for P+} reads
\begin{equation}
  \duality{z - u(t+) + y(t+)}{y(t+) - y(t)} \le 0 \qquad \forall \zeta \in \Z,
\end{equation}
hence, since $\cont(y) = \cont(u)$, for every $z \in \L^\infty(\clint{0,T};\H)$ with $z(\clint{0,T}) \subseteq \Z$ we have
\begin{align}
  & \int_{\clint{0,T}} \duality{z(t) - u(t+) + y(t+)}{\de\D y(t)} \notag \\
  & = \int_{\cont(u)} \duality{z(t) - u(t) + y(t)}{\de\D y(t)} \notag \\
  & \phantom{= \ } + 
          \sum_{t \in \discont(u)} \duality{z(t) - u(t+) + y(t+)}{y(t+) - y(t)} \le 0 \notag
\end{align}
and \eqref{BV-l int ineq for P} is proved.
\end{proof}

We conclude with the 

\begin{proof}[Proof of Theorem \ref{main-right continuous case}]
We proceed as in the previous proof but we use now \eqref{BV jump cond for P-}: if $t \in \discont(u)$ it reads 
\begin{equation}
  \duality{z - u(t) + y(t)}{y(t) - y(t-)} \le 0 \qquad \forall \zeta \in \Z.
\end{equation}
Hence, since $\cont(y) = \cont(u)$ and $u$ is right continuous, for every $z \in \L^\infty(\clint{0,T};\H)$ with 
$z(\clint{0,T}) \subseteq \Z$ we have
\begin{align}
  & \int_{\clint{0,T}} \duality{z(t) - u(t) + y(t)}{\de\D y(t)} \notag \\
  & = \int_{\cont(u)} \duality{z(t) - u(t) + y(t)}{\de\D y(t)} \notag \\
  & \phantom{= \ } + 
          \sum_{t \in \discont(u)} \duality{z(t) - u(t) + y(t)}{y(t) - y(t-)} \le 0 \notag
\end{align}
and we are done.
\end{proof}


\begin{Ack}
The author is grateful to the referees, whose comments and remarks led to improve the paper. 
\end{Ack}



\end{document}